\newtheorem{theorem}{Theorem}
\newtheorem{lemma}[theorem]{Lemma}
\newtheorem{proposition}[theorem]{Proposition}
\newtheorem{corollary}[theorem]{Corollary}
\newtheorem{definition}[theorem]{Definition}
\newtheorem{remark}[theorem]{Remark}
\numberwithin{theorem}{section}
\numberwithin{equation}{section}
\def\supp{\hbox{supp}}
\def\eps{\varepsilon}
\def\part{\partial}
\newcommand{\R}{{\mathord{\mathbb R}}}
\begin{document}

\title[Null quadrature domains]
{Null quadrature domains and a free boundary problem for the Laplacian}

\author[L. Karp]{Lavi Karp*}
\email{karp@braude.ac.il}
\address{Department of Mathematics,
         ORT Braude College,
         P.O. Box 78,
         21982 Karmiel,
         Israel}\thanks{*Supported by ORT
Braude College's Research Authority}

\author[A.S. Margulis]{Avmir S. Margulis}
\email{avmirs@yahoo.com}
\address{6 Hakozrim St., Herzliya, Israel}

\keywords{A free boundary problem, null quadrature domains,  Schwarz
potential, quadratic growth.}
 \subjclass[2010]{Primary 35J25, 31B20; Secondary 35B65,
 31C15}


\dedicatory{Dedicated  to Harold S. Shapiro}

\begin{abstract} Null quadrature domains are unbounded domains in
$\R^n$ ($n \geq 2$) with external gravitational force zero in some
generalized sense. In this paper we prove that the complement of null
quadrature domain is a convex set with real analytic boundary.
We establish the quadratic growth estimate for the Schwarz potential of a null
quadrature domain which reduces our main result to Theorem II of \cite{CKS}
on the regularity of solution to the classical global free boundary problem
for Laplacian. We also show that any null quadrature domain with non-zero
upper Lebesgue density at infinity is half-space. 
\end{abstract}

\maketitle


\section{Introduction}

An open set $\Omega$ in $\R^n$ is called a {\it null quadrature
domain} if 
\begin{equation*}
 \int\limits_\Omega hdx=0
\end{equation*}
 for all harmonic and integrable functions $h$ in $\Omega$. This class
of domains includes half-spaces, exterior of ellipsoids, exterior of
strips,  exterior of elliptic paraboloids and cylinders over domains
of these types. In 1980 Sakai showed that in $\R^2$ any   null
quadrature domain belongs to one of the categories above \cite{Sa1}. A
complete description of all null quadrature domains in  higher dimensions
has remained an open problem; the best result was obtained in
\cite{KM}, where the case of  a complement contained in a cylinder of
co-dimension two  was settled.

An equivalent definition of null quadrature domain $\Omega$ is by
means of a free boundary problem and potentials.  If there exists
a solution $u$ to the system
\begin{equation}
\label{eq:intro:1} \Delta u=-\chi_\Omega,\quad u=|\nabla u|=0 \
\text{on}\ \R^n\setminus \Omega,
\end{equation}
then $u$ is called the {\it Schwarz potential} of $\Omega$. The Schwarz
potentials of null quadrature domains are  a subclass of all solutions to
(\ref{eq:intro:1})
(see Section \ref{sec:nfbp}). This subclass is characterized by  an additional
condition, namely, the Schwarz potential $u$ is a (Newtonian) potential of
$\Omega$, and it can be expressed in term of a growth property 
at infinity. Thus a domain $\Omega$ is a null quadrature domain if and only if
there exists a solution $u$ to the free boundary 
problem (\ref{eq:intro:1})  satisfying 
\begin{equation} \label{eq:intro:2}
u(x)=o(|x|^3)\quad\text{as}\ |x|\to\infty, 
\end{equation} 
see Corollary \ref{TM15} below.

Shapiro conjectured that  the complement of a null quadrature domain is a convex
set  \cite{Shp2}, and he showed  it under   some additional restrictions both on
the domain $\Omega$  and its Schwarz potential $u$.

Under the assumption that the Schwarz potential $u$ of $\Omega$ has  a quadratic
growth, Caffarelli, Karp and Shahgholian   proved that the complement,
$\Omega^c$, is convex and the boundary,  $\partial\Omega$, is a real analytic
surface, \cite[Theorem II]{CKS}.

Our principal result confirms Shapiro's conjecture and shows that any null
quadrature domain $\Omega$  has a convex complement and  real analytic
boundary. The main technical tool  is the establishment of the quadratic
growth of solutions to the free boundary problem (\ref{eq:intro:1}) when the a
priori growth condition (\ref{eq:intro:2}) is imposed.

The system (\ref{eq:intro:1}) arises also
in the investigation of the regularity of free boundaries in the
obstacle problem as a ``blow up limits' (see e.g. \cite{Ca2,Fr}).
However, in that case there is an additional condition, namely $u\leq
0$ in $\R^n$. Under this condition, Caffarelli proved that the
complement of $\Omega$  is a convex set \cite{Ca2}. The essence of Theorem II
in \cite{CKS} is showing that any Schwarz potential with a quadratic
growth is non-positive, hence the unsigned  free
boundary problem inherits the same regularity properties as the obstacle
problem in that case.  However, when the growth condition (\ref{eq:intro:2}) is
removed, then there are solutions to (\ref{eq:intro:2}) with   a non-convex
complement, see \cite{Shp2} and  Section \ref{sec:nfbp} for further examples.

The quadratic growth  problem occurs also
in the local version of (\ref{eq:intro:1}),  that is, if
$x_0$ is a point on the free boundary, then a priori $|u(x)|\leq C
|x-x_0|^2\log |x-x_0|$.  In that case the quadratic growth was resolved
in \cite[Theorem I]{CKS}. In order to obtain it in
the global case, we  combine the techniques of \cite{CKS} with
Shapiro's quasi-balayage \cite{Shp5}, and
some estimates of the generalized Newtonian potential \cite{KM}.

The generalized Newtonian potential  is a multi-valued right inverse to the
Laplacian and unique up to a harmonic polynomial of degree not exceeding two.
It enables the 
computation of a potential of arbitrary domains in $\R^n$ in a similar
manner to the ordinary Newtonian potential (see  \cite{Ka1, KM, Ma1}). For
example, the
distribution $u$ which satisfies (\ref{eq:intro:1}) and
(\ref{eq:intro:2}) is a generalized Newtonian potential of the null
quadrature domain $\Omega$. Since $u$ is zero on $\Omega^c$, we see
that  null quadrature domains  consist of open sets in $\R^n$ having
zero gravitational force in their complement. 

Another characterization of null quadrature domains is through the
potential of their complement. Namely, $\Omega$ is a null quadrature
domain if and only if the generalized Newton potential of
$D:=\R^n\setminus\Omega$ coincides with a quadratic polynomial in $D$
(see Theorem \ref{thm:8} below). In case $D$ is bounded, this means
that in a suitable coordinates system the shell $\lambda D\setminus D$
produces no gravitational force in the cavity $D$. The classification
of such domain was settled by Dive \cite{Di}, and Nikliborc \cite{Ni}.
We refer to \cite{MK} for further discussion on that property (see
also \cite{FS, Ka2, KM}).

Thus the main difficulty of the classification of null quadrature
domains occurs when the complement $D:=\R^n\setminus\Omega$ is
unbounded. In this paper we show that if $D$ has a positive  upper
Lebesgue density of at infinity, then it must be a half-space. The
classification when the Lebesgue density of $D$  is zero will be
discussed in \cite{MK}. 

The growth condition (\ref{eq:intro:2})  is indispensable for $\Omega$ being a
null quadrature domain.  Indeed, Gustafsson and Shapiro \cite{Shp2}, and
independently Richardson \cite{Ri}, used conformal mappings in order to
construct  a two dimensional domain $\Omega$ and its  the Schwarz potential $u$,
such  that
$\R^2\setminus\Omega$ is bounded and $u$ has polynomial growth, or even  an
essential singularity,  at infinity. Hence  the Scwharz potential $u$ is  not a
(Newtonian) potential and consequently  the complement  $\R^2\setminus\Omega$ is
not necessarily convex. 
 Shapiro
raised the question whether an analogous example exists in higher
dimensions \cite{Shp2}. We present an $n$-dimensional version of the
above example,  our construction is based upon a theorem on the solvability ``in
the small'' of
the inverse problem of potential theory (see \cite{Is} for details).

The plan of the paper is as follows. In the next Section we provide
some background on generalized Newtonian potential, quadrature domains
and the Schwarz potentials.  Section \ref{sec:nfbp} 
deals with harmonic continuation of potentials, its relations to null quadrature
domain and the construction of a solution to (\ref{eq:intro:1}) with
arbitrarily growth at infinity. In the last section we prove the
quadratic growth of the Schwarz potential and discuss geometric
properties of null quadrature domains.

\section{Background and notions}
\label{sec:background}

In this section we present a brief account of the generalized
Newtonian
potential, quadrature domains and Schwarz potentials.  Throughout of
this paper points in $\R^n$ are denoted by $x$ or $y$, $(x\cdot y)$
is the scalar product and
$|x|=\sqrt{(x\cdot x)}.$ For a set $A \subset
\R^n$,
we denote by $A^c$ the complement of $A$,  $\overline{A}$  the
closure of $A$, $\text{int}(A)$  its interior,  $\part A$ the
boundary of $A$, $\chi_A$ the characteristic function of $A$  and by
$|A|$ is the Lebesgue measure of a
measurable set $A$. 
An open ball with radius $r$ and center $x_0$ is denoted by
$B_r(x_0)$.

\subsection{Generalized Newtonian Potential}

The   definition   and establishment of basic  properties of this
potential were carried out in \cite{KM} (see also \cite{Ka1,Ma1}).  
Here we recall the definition and present a few applications.
A different extension of the Newtonian potential for measures with
non-compact support is given in \cite[Ch.II, \S3]{DL}. But their
extension does not
include the class of densities in $L^\infty(\R^n)$.

The Newtonian potential of a measure $\mu$ with compact support is
defined by means of the convolution
\begin{equation}
\label{eq:27}
    V(\mu)(x)=\left( J\ast\mu \right)(x)=\int J(x-y)d\mu(y),
\end{equation} 
where
\[
    J(x)=\left\{\begin{array}
        {ll}-\dfrac{1}{2\pi}\log |x| ~~, \quad &n=2\\
        \dfrac{1}{(n-2)\omega_n |x|^{n-2}} ~~, \quad &n\geq 3
    \end{array}\right.
\]
and $\omega_n$ is the area of the unit sphere in $\R^n$. The potential
$V(\mu)$ satisfies the Poisson equation $\Delta V(\mu)=-\mu$ in the
distributional sense. The {\it generalized Newtonian
potential} is a multi-valued right inverse of the Laplacian on the
space  $\mathcal{L}$, the space of all Radon measures $\mu$ in $\R^n$
satisfying
condition
\begin{equation} 
\label{eq:1} 
\left\|\mu\right\|_{\mathcal{L}} :=
\int \frac{d|\mu|(x)}{1+|x|^{n+1}} < \infty. 
\end{equation} 
The linear space $\mathcal{L}$ is the Banach space with the norm
defined by (\ref{eq:1}).
For $\mu \in \mathcal{L}$ we define the third order generalized derivatives of the potential
as tempered distributions by setting
\[
    \langle V^\alpha(\mu), \varphi \rangle := - \int \part^\alpha V(\varphi)(x) d\mu(x) ~~,
    \quad \varphi\in \mathcal{S},~~|\alpha|=3,
\]
where $\mathcal{S}$ is the Schwartz class of rapidly decreasing functions. 

\begin{definition}      
\label{Generalized Newtonian potential}
The generalized Newtonian potential $V[\mu]$ of a measure $\mu\in \mathcal{L}$ is the set of
all solutions to the system
\begin{equation}
\label{eq:28}
    \left\{
    \begin{array}
        {l}\Delta u=-\mu \\
        \part^\alpha u=V^\alpha(\mu) ~,\quad |\alpha|=3~~.
    \end{array} \right.
\end{equation}
\end{definition}
The existence of solutions to system (\ref{eq:28}) was proved in
\cite{KM}. The solution of system (\ref{eq:28}) is unique modulo
$\mathcal{H}_2
$, the space of all harmonic polynomials of degree at most two. The
operator $V \colon \mathcal{L} \to \mathcal{S}^\prime / \mathcal{H}_2$
is continuous \cite{KM}.

Here and in the sequel we denote by $V[\sigma]$ the generalized
potential of a measure $\mu = \sigma \lambda$,
where $\lambda$ is the Lebesgue measure and $\sigma
\in L^\infty(\R^n)$. We use the notation $V[A] = V[\chi_A]$ when
 $\sigma= \chi_A$, the characteristic function of a measurable set
$A$. For $v \in V[\mu]$, we may
use the terminology of a generalized Newtonian potential
of $\mu$, or  simply  a potential of $\mu$.

The generalized Newtonian potential is an adequate notion for
explicit calculations of potentials
of unbounded domains and of measures with a non-compact support. The
following two examples  demonstrate it.
By Definition \ref{Generalized Newtonian potential},
\[
    V[\R^n] = - \frac{|x|^2} {2n} + \mathcal{H}_2,
\]
and hence for any measurable set $A$ we have the {\it complementary formula}
\begin{equation}
\label{eq27}
    V[A] + V[A^c] = - \frac{|x|^2} {2n} + \mathcal{H}_2.
\end{equation}

In \cite{KM} we derived an explicit formula for the potential of an
arbitrary cone. Let $K$ be a cone with the vertex at the origin. Then
 $u$ is a generalized Newtonian potential of the cone $K$ if
and only if
\begin{equation}
\label{eq:30}
    u(x)=P_2(x)\log |x| +\Phi(x) +q(x),
\end{equation} 
where $P_2$ is a homogeneous harmonic polynomial of degree two, $\Phi$
is a homogeneous function of order two and $q$ is a linear polynomial.
If $P_2\not\equiv 0$, then the logarithmic term  is
the dominant term both at the origin and  infinity. If $K$ is
convex and different from a half-space, or if $\overline{K}$ is
contained in a "critical cone"
\[
    K_{cr} := \{x : |x|^2 < n (x \cdot a)^2\},
\]
where $a$ is a unit vector, then $P_2 \not \equiv 0$, i.e. the logarithmic term is not zero.
On the other hand, for any $\eps > 0$ there are cones 
    $K \subset \{x : |x|^2 < (n+\eps) (x \cdot a)^2\}$
with $P_2 \equiv 0 $.

Another useful feature is an integral representation of the
generalized potential \cite{Ka1, KM}. Let 
\begin{equation}
\label{eq29} 
J_2(x,y):=  J(x-y)- \chi_{\{y:|y|>1\}} \sum_{|\alpha|\leq
2}\dfrac{(-x)^{\alpha}}{\alpha!}\part^\alpha J(y), 
\end{equation} 
and set 
\begin{equation}
 \label{eq:31} 
V_2(\mu)(x) := \int \limits_{\R^n} J_2(x,y) d\mu(y).
\end{equation}
Then
the potential $V_2(\mu)$ belongs to $V[\mu]$ (cf. \cite{KM}), and for
any $\sigma\in L^\infty(\R^n)$ the following estimate holds (cf.
\cite{Ka1, Shp4}): \begin{equation} \label{eq26} |V_2(\sigma)(x)|\leq
C\|\sigma\|_\infty (1+|x|)^2\log(2+|x|). 
\end{equation} 
Note that any $v \in V[\sigma]$ must satisfy (\ref{eq26}), since
$V[\sigma] = V_2(\sigma)+\mathcal{H}_2$. Therefore, by Liouville's
theorem we get
an equivalent definition of the generalized Newtonian potential for
$\sigma\in L^\infty(\R^n)$:

\begin{proposition} 
\label{thm:4}
A solution $u$ to the Poisson equation $\Delta u=-\sigma$ with
$\sigma\in L^\infty(\R^n)$
is a generalized potential of $\sigma$ if and only if it satisfies the estimate (\ref {eq26}).
\end{proposition}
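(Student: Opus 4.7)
The plan is to use $V_2(\sigma)$ as a distinguished representative in $V[\sigma]$ and reduce the statement to a Liouville-type theorem for entire harmonic functions of polynomial growth. Since $V_2(\sigma) \in V[\sigma]$, the quotient description $V[\sigma] = V_2(\sigma) + \mathcal{H}_2$ already appears in the paragraph just above the proposition, so both directions will pivot on the difference $w := u - V_2(\sigma)$.

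For the easy direction, I would start from $u \in V[\sigma]$: then $u = V_2(\sigma) + p$ with $p \in \mathcal{H}_2$. The first summand satisfies (\ref{eq26}) by the estimate cited in the excerpt, and the second is a polynomial of degree at most two, which is trivially dominated by $C(1+|x|)^2 \log(2+|x|)$. Adding the two gives (\ref{eq26}) for $u$, with constant depending on $\|\sigma\|_\infty$ and on the coefficients of $p$.

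For the substantive direction, assume $\Delta u = -\sigma$ and that $u$ satisfies (\ref{eq26}). Since $\Delta V_2(\sigma) = -\sigma$ as well, the function $w = u - V_2(\sigma)$ is harmonic on all of $\R^n$. Combining the bound on $u$ with the bound on $V_2(\sigma)$ from (\ref{eq26}) gives
\begin{equation*}
|w(x)| \leq C(1+|x|)^2 \log(2+|x|), \qquad x \in \R^n.
\end{equation*}
The key step is now a Liouville-type argument: for any $\varepsilon \in (0,1)$ one has $\log(2+|x|) \leq C_\varepsilon (1+|x|)^{\varepsilon}$, so $w$ is entire harmonic with growth $O(|x|^{2+\varepsilon})$. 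Applying the standard mean value / interior derivative estimates to $\part^\alpha w$ on balls $B_R(x)$ and letting $R \to \infty$ shows that all derivatives of $w$ of order three vanish identically, so $w$ is a harmonic polynomial of degree at most two, i.e. $w \in \mathcal{H}_2$. Consequently $u = V_2(\sigma) + w \in V_2(\sigma) + \mathcal{H}_2 = V[\sigma]$.

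The only non-bookkeeping ingredient is the Liouville step, and the mild obstacle there is the logarithmic factor in (\ref{eq26}); this is absorbed by an arbitrarily small power of $|x|$, so one stays strictly below the threshold degree three and the conclusion $w \in \mathcal{H}_2$ is safe. Everything else is a direct invocation of the representation $V[\sigma] = V_2(\sigma) + \mathcal{H}_2$ and the estimate (\ref{eq26}) that were recorded just above the proposition.
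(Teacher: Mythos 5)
Your argument is correct and follows exactly the route the paper indicates: the forward direction from $V[\sigma]=V_2(\sigma)+\mathcal{H}_2$ together with the estimate (\ref{eq26}) for $V_2(\sigma)$, and the converse by applying Liouville's theorem to the entire harmonic difference $u-V_2(\sigma)$ to conclude it lies in $\mathcal{H}_2$. Your explicit handling of the logarithmic factor via an arbitrarily small power is just a fleshed-out version of the paper's appeal to Liouville's theorem, so nothing is missing.
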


\subsection{Quadrature Domains  and Schwarz potentials} $~$
 
The mean value theorem for harmonic functions can be written as a 
{\it quadrature identity}
\[
    \int \limits_\Omega hdx = |\Omega| h(x_0) = \int h d\mu, 
\]  
where $\Omega = B_r(x_0)$ is a ball centered at $x_0$ and $\mu$ is
the Dirac measure times
the volume of the ball. This property is equivalent to Newton's
theorem asserting that the ball gravitationally attracts points
outside it as if all its mass were concentrated at the center of the
ball.

A natural generalization of the above property of balls is the notion
of {\it quadrature
domain} (see \cite{GS, Sa2, Shp1}):

\begin{definition} 
Let $\mu$ be a Radon measure in $\R^n$. An open set $\Omega$ is called
a quadrature domain of $\mu$  and with respect to harmonic functions
if
    $\mu \big |_{\R^n \setminus \Omega} = 0$,
and
\begin{equation}
\label{eq:29}
    \int\limits_\Omega hdx =\int h d\mu \quad  \quad\text{for all} \ \ h\in HL^1(\Omega)
\end{equation}
where $HL^1(\Omega)$ is the class of all functions harmonic in $\Omega$ and integrable
over $\Omega$.
\end{definition}

The identity of the external potential
of $\Omega$ and $\mu$ out of $\Omega $ is probably one of the 
 most important property of a quadrature domains.

Observe, if $\Omega$ is a quadrature domain of $\mu$ and
$\overline{\Omega} \ne \R^n$, then we may assume that 
$B_1(0) \subset \Omega^c$. Hence for  any $y \in \Omega$, the kernel $J_2$
defined by (\ref{eq29}) becomes
\[
    J_2(x,y) =  J(x-y)- \sum_{|\alpha|\leq 2}
    \dfrac{(-x)^{\alpha}}{\alpha!}\part^\alpha J(y).
\]
Therefore for any fixed $x \in \Omega^c$ the function $J_2(x,
\cdot)$
and its first order  derivatives belong to $HL^1(\Omega)$. Applying the identity
(\ref{eq:29})
to these functions, we conclude that
\[
    V_2(\Omega)\big|_{\Omega^c} =
    V_2(\mu)\big|_{\Omega^c} \quad \text{and} \quad \nabla
    V_2(\Omega)\big|_{\Omega^c} = \nabla V_2(\mu)\big|_{\Omega^c}.
\]
Hence, $u:= V_2(\Omega-\mu)$ is the solution of the following free
boundary problem: 
\begin{equation}
\label{eq:33}
\left\{\begin{array}{ll}
    \Delta u =-(\chi_\Omega-\mu) \quad &\text{in} \quad \R^n\\
     u=|\nabla u|=0\ \quad &\text{in} \quad  \R^n\setminus\Omega     
 
       \end{array}\right..
\end{equation}
Obviously, if $\overline \Omega \ne \R^n$, then the solution of (\ref{eq:33}) is
unique. This follows from the fact that the difference of any two
solutions is a harmonic
function in $\R^n$ which vanishes  on $\Omega^c$.

\begin{definition} A distribution  $u$ satisfying 
(\ref{eq:33}) is called the \textit{Schwarz potential} of the pair
$(\Omega,\mu)$ (in \cite{Shp4} it is called "the modified Schwarz
potential"). \end{definition}

Thus whenever $\Omega$ is a quadrature domain of a measure $\mu$,
then the Schwarz potential  of the pair $(\Omega,\mu)$ exists,
moreover, in that
case the Schwarz potential $u$ is a potential, that is, $u\in
V[\Omega-\mu]$.

Therefore, in order that the existence of the Schwarz potential would
imply that $\Omega$ is a quadrature domain, it is necessary that the
Schwarz potential is a potential. But this is not sufficient.
Indeed, in \cite{KM} we presented an example of a pair $(\Omega,\mu)$
with a bounded domain $\Omega$ and a finite measure $\mu$ such that
$\mu \big |_{\R^n \setminus \Omega} = 0$ for which the Schwarz
potential of $(\Omega,\mu)$ is a potential.  But the space $
HL^1(\Omega)$ is not contained in $L^1(|\mu|)$, so identity
(\ref{eq:29}) cannot hold. Therefore there is a subtle difference
between the property of an open set $\Omega$ being a quadrature domain
of $\mu$, and the existence of the Schwarz potential of $(\Omega,\mu)$
which is a potential. Under certain restrictions, the equivalence does
hold (see e.g. \cite[\S2]{MK}, \cite{GS, Sa1}).

\begin{proposition}
 \label{prop:1}
Let $\Omega$ be an open set and assume the measure $\mu$ has a
compact support in $\Omega$ or $\mu=\sigma\lambda$, where $\sigma\in
L^\infty(\R^n)$ and $\lambda$ is the Lebesgue measure.
Then $\Omega$ is a quadrature domain of $\mu$ if and only if there
  exists $u\in V[\Omega-\mu]$ which is the Schwarz potential of
$(\Omega,\mu)$.
\end{proposition}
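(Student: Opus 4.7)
The plan is to prove the two implications separately. The forward direction applies the quadrature identity (\ref{eq:29}) to the test functions $J_2(x,\cdot)$ and $\nabla_x J_2(x,\cdot)$ for $x\in\R^n\setminus\overline\Omega$; the reverse direction uses Green's identity with a cutoff at infinity, with the main obstacle being control of annular integrals.

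For the forward direction, assume $\Omega$ is a quadrature domain of $\mu$. After a translation I may assume $B_1(0)\subset\Omega^c$. Fix $x\in\R^n\setminus\overline\Omega$. Since $y\in\Omega$ implies $|y|>1$, and since $x,0\notin\overline\Omega$, the function $y\mapsto J_2(x,y)=J(x-y)-\sum_{|\alpha|\le 2}\tfrac{(-x)^\alpha}{\alpha!}\partial^\alpha J(y)$ is harmonic in $\Omega$; Taylor's theorem gives $J_2(x,y)=O(|y|^{-n-1})$ as $|y|\to\infty$, so $J_2(x,\cdot)\in HL^1(\Omega)$, and analogously $\nabla_x J_2(x,\cdot)\in HL^1(\Omega)$. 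The hypothesis on $\mu$ ensures $HL^1(\Omega)\subset L^1(|\mu|)$. Applying (\ref{eq:29}) to these test functions yields
\[
V_2(\chi_\Omega)(x)=V_2(\mu)(x),\qquad \nabla V_2(\chi_\Omega)(x)=\nabla V_2(\mu)(x),\qquad x\in\R^n\setminus\overline\Omega.
\]
Hence $u:=V_2(\chi_\Omega-\mu)\in V[\chi_\Omega-\mu]$ satisfies $u=|\nabla u|=0$ on $\R^n\setminus\overline\Omega$, and continuity (since $u$ is $C^{1,\alpha}_{\mathrm{loc}}$ as $\chi_\Omega-\mu$ is bounded or compactly supported) extends this to all of $\R^n\setminus\Omega$; this $u$ is the desired Schwarz potential.

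For the reverse direction, let $u\in V[\chi_\Omega-\mu]$ be a Schwarz potential and fix $h\in HL^1(\Omega)$. From $u\equiv 0$ on $\Omega^c$ one deduces $\mu|_{\Omega^c}=0$, whence
\[
\int_\Omega h\,dx-\int h\,d\mu=\int h(\chi_\Omega-\mu)\,dx=-\int h\,\Delta u\,dx.
\]
With a cutoff $\varphi_R\in C_c^\infty(B_{2R})$ equal to one on $B_R$ and satisfying $|\nabla\varphi_R|\le C/R$, $|\Delta\varphi_R|\le C/R^2$, Green's formula on $\Omega\cap B_{2R}$, combined with $u=|\nabla u|=0$ on $\partial\Omega$ and $\Delta h=0$ in $\Omega$, gives
\[
-\int h\varphi_R\,\Delta u\,dx=\int_{\Omega\cap(B_{2R}\setminus B_R)}\bigl(2\nabla h\cdot\nabla\varphi_R+h\,\Delta\varphi_R\bigr)u\,dx.
\]
Dominated convergence sends the left-hand side to $-\int h\,\Delta u\,dx$ as $R\to\infty$ (using $h\in L^1(\Omega)\cap L^1(|\mu|)$), so the quadrature identity follows once the annular integral on the right is shown to vanish.

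The main obstacle is precisely this annular estimate. The growth bound (\ref{eq26}) yields $|u|\le CR^2\log R$ on the annulus, so the integrand is pointwise $O(|h|\log R+R|\nabla h|\log R)$. For bounded $\Omega$ the integrand vanishes identically once $R$ exceeds the diameter. For unbounded $\Omega$, absorbing the logarithmic factor requires sharper decay of $h$: harmonic $L^1$ functions in a neighborhood of infinity inside $\Omega$ admit an expansion whose leading non-trivial terms are third-order derivatives of $J$, yielding $|h(y)|=O(|y|^{-n-1})$ and $|\nabla h(y)|=O(|y|^{-n-2})$, so the annular integrals are of order $(\log R)/R\to 0$. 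Alternatively, one may bypass the annular estimate by establishing density of $\{J_2(x,\cdot),\nabla_x J_2(x,\cdot):x\in\operatorname{int}(\Omega^c)\}$ in $HL^1(\Omega)$ via a Hahn-Banach/uniqueness argument, reducing the quadrature identity to exactly the test functions already treated in the forward direction; the polynomial correction $p\in\mathcal{H}_2$ in the decomposition $u=V_2(\chi_\Omega-\mu)+p$ is then forced to vanish because $-p(x)=V_2(\chi_\Omega-\mu)(x)$ must coincide with the (now-established) zero quadrature defect for every $x\in\operatorname{int}(\Omega^c)$, closing the equivalence.
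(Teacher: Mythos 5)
Your forward implication is, in substance, the paper's own argument: the paper proves exactly this direction in the discussion preceding the proposition, by applying the quadrature identity (\ref{eq:29}) to $J_2(x,\cdot)$ and its $x$-derivatives and concluding that $V_2(\Omega-\mu)$ is the Schwarz potential; your added remark that the hypothesis on $\mu$ gives $HL^1(\Omega)\subset L^1(|\mu|)$ is correct and is indeed where that hypothesis enters. The converse is not proved in the paper at all --- it is delegated to \cite[\S 2]{MK}, \cite{GS}, \cite{Sa1} --- and it is in your attempted proof of this direction that there is a genuine gap.

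The gap is the decay claim for $h\in HL^1(\Omega)$. The expansion with leading terms given by third-order derivatives of $J$, yielding $|h(y)|=O(|y|^{-n-1})$ and $|\nabla h(y)|=O(|y|^{-n-2})$, is available only when $\Omega$ contains a neighborhood of infinity, i.e.\ when $\Omega^c$ is bounded. For the domains that are the main concern of this paper --- half-spaces, exteriors of paraboloids, complements of strips and cylinders --- $\Omega^c$ is unbounded, $h$ is harmonic in no neighborhood of infinity, and integrability gives no such pointwise bound. Concretely, in the half-space $\{x_n>0\}$ take $h=\sum_k k^{-2}\,\partial^\alpha J(\cdot-p_k)$ with $\alpha=(0,\dots,0,3)$ and poles $p_k=(2^k,0,\dots,0,-1)$: by horizontal translation invariance every term has the same finite $L^1$-norm over the half-space, so $h\in HL^1$, yet at points a bounded distance above $p_k$ one has $|h|\geq c\,k^{-2}$, vastly larger than $|p_k|^{-n-1}$; so your annular term, which must absorb the factor $R^2\log R$ coming from (\ref{eq26}), is not controlled. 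In addition, the Green-formula step on $\Omega\cap B_{2R}$ silently assumes enough regularity of $\partial\Omega$, and enough boundary behaviour of the merely integrable $h$, to discard the boundary terms using $u=|\nabla u|=0$; since no regularity of $\partial\Omega$ is assumed (regularity is what the paper ultimately proves), this requires an exhaustion argument exploiting the quantitative vanishing of $u$ and $\nabla u$ near $\partial\Omega$, which you do not supply. Finally, the proposed fallback --- density of $\{J_2(x,\cdot),\nabla_x J_2(x,\cdot):x\in\mathrm{int}(\Omega^c)\}$ in $HL^1(\Omega)$ via Hahn--Banach --- is not an argument but a restatement of the difficulty: showing that a bounded function annihilating these kernels annihilates all of $HL^1(\Omega)$ is essentially the converse implication itself (and the family is empty if $\mathrm{int}(\Omega^c)=\emptyset$). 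As written, the reverse direction does not go through; it needs the machinery of the references the paper cites.
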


The main theme of this paper is null quadrature domains, that is,
identity  (\ref{eq:29}) holds with $\mu\equiv 0$. By Proposition
\ref{prop:1} this is equivalent to the existence of $u\in V[\Omega]$
satisfying (\ref{eq:33}). It means that  the domain $\Omega$ produces
zero gravitational force in the complement $\Omega^c$.  Applying the
complementary formula (\ref{eq27}) to the generalized potential of a
null quadrature domain $\Omega$, we obtain:

\begin{theorem}[\cite{Ka1, KM}]
 \label{thm:8}
An open set $\Omega$ is a null quadrature domain if and only if
$V[\R^n\setminus\Omega]$ coincides with a quadratic polynomial in $
\R^n\setminus\Omega$.
\end{theorem}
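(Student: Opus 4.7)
The plan is to combine Proposition \ref{prop:1} with the complementary formula (\ref{eq27}). Setting $\mu\equiv 0$ in Proposition \ref{prop:1}, $\Omega$ is a null quadrature domain if and only if there exists $u\in V[\Omega]$ satisfying the Schwarz potential system $\Delta u=-\chi_\Omega$ in $\R^n$ together with $u=|\nabla u|=0$ on $\Omega^c$. The task thus reduces to showing that such a Schwarz potential exists if and only if some element of $V[\Omega^c]$ coincides with a quadratic polynomial on $\Omega^c$.

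For the forward implication, starting from the Schwarz potential $u\in V[\Omega]$, set $v:=-\frac{|x|^2}{2n}-u$. The complementary formula $V[\Omega]+V[\Omega^c]=-\frac{|x|^2}{2n}+\mathcal{H}_2$ places $v$ in $V[\Omega^c]$, and since $u\equiv 0$ on $\Omega^c$ we immediately read off $v\equiv -\frac{|x|^2}{2n}$ on $\Omega^c$, which is already a quadratic polynomial.

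For the converse, assume $v\in V[\Omega^c]$ coincides with a quadratic polynomial $P$ on $\Omega^c$, and assume $\mathrm{int}(\Omega^c)\ne\emptyset$. Since $\Delta v=-\chi_{\Omega^c}=-1$ on $\mathrm{int}(\Omega^c)$ while $v=P$ there, one concludes $\Delta P=-1$; hence $h:=P+\frac{|x|^2}{2n}$ is a harmonic polynomial of degree at most two. Setting $u:=P-v=-\frac{|x|^2}{2n}-v+h$, the complementary formula again gives $u\in V[\Omega]$, and by construction $u\equiv 0$ on $\mathrm{int}(\Omega^c)$, so $\nabla u\equiv 0$ there as well. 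The elliptic regularity provided by $\Delta u=-\chi_\Omega\in L^\infty$ makes $u$ and $\nabla u$ continuous, so both vanish on $\overline{\mathrm{int}(\Omega^c)}$. Under the mild assumption that $\Omega^c$ equals the closure of its interior this yields $u=|\nabla u|=0$ throughout $\Omega^c$, and Proposition \ref{prop:1} then identifies $\Omega$ as a null quadrature domain.

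The only subtle point is propagating the vanishing of $u$ and $\nabla u$ from $\mathrm{int}(\Omega^c)$ to all of $\Omega^c$. This is routine given $W^{2,\infty}$-style regularity of $u$ and any mild geometric hypothesis excluding pathological complements (empty interior, isolated points in $\Omega^c$, etc.), which is exactly the setting relevant to the geometric study of null quadrature domains pursued in the rest of the paper.
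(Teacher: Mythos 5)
Your argument is correct and follows exactly the route the paper indicates: Theorem \ref{thm:8} is obtained by applying the complementary formula (\ref{eq27}) to Proposition \ref{prop:1} with $\mu\equiv 0$, which is precisely your decomposition $u=P-v$ with $h=P+|x|^2/(2n)\in\mathcal{H}_2$. The ``subtle point'' you flag (propagating $u=|\nabla u|=0$ from $\mathrm{int}(\Omega^c)$ to $\Omega^c$) is covered by the paper's standing convention at the end of Section \ref{sec:background} that $\Omega=\mathrm{int}(\overline{\Omega})$, under which the gradient condition on $\Omega^c$ may be dropped, so no gap remains.
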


\begin{remark} Newton discovered that the gravitational attraction at
any internal
point of elliptical shell is zero (see e.g. \cite{Cha}). This  
phenomenon is
equivalent to the property that  the internal Newtonian potential of
ellipsoids is a quadratic polynomial. Conversely, if $D$ is a bounded
domain and its Newtonian potential coincides with a quadratic
polynomial in $D$, then $D$ is an ellipsoid \cite{BF, Di, Ni}. The
characterization of domains $D$ for which their  internal generalized
Newtonian potential is a quadratic polynomial is an open problem. 
\end{remark}

We discuss now local properties of the Schwarz potential. Suppose $u$
is a Schwarz potential of $(\Omega,\mu)$ and $x_0\in
\part\Omega\setminus\supp(\mu)$.  Then $u$ satisfies the free boundary
problem \begin{equation*} \left\{\begin{array}{ll}\Delta u=-1 \quad &
\text{in}\ \ \Omega\cap B_r(x_0)\\ u(x)=|\nabla u(x)|=0 \quad &
\text{on}\ \ B_r(x_0)\setminus\Omega\end{array}\right. .
\end{equation*} Lemma 2.11 in \cite{KM} claims that under these
conditions the free boundary $\part\Omega\cap B_r(x_0)$ has  zero
Lebesgue measure  (Actually this Lemma is valid in a much wider
situations and it can be applied  whenever the density of $\Omega$ is
a positive $L^\infty$-function). Therefore, if $\Omega$ is a
quadrature domain of a measure $\mu$ such that
$\supp(\mu)\cap\part\Omega=\emptyset$, then $\part\Omega$ has
zero Lebesgue
measure  and consequently $\Omega_0=\text{int}\left(
\overline{\Omega} \right)$ satisfies the same quadrature identity
(\ref{eq:29}). Therefore in the sequel we will assume that any
quadrature domain $\Omega$ is the interior of its closure, and we may
drop the requirement that  the gradient of the Schwarz
potential in (\ref{eq:33}), vanishes on $\Omega^c$.

\section{Null quadrature domains and harmonic continuation of
potentials} 
\label{sec:nfbp}

If $u$ is the Schwarz potential of $(\Omega,\mu)$ and $\R^n
\setminus
\supp(\mu)$ is
connected, then for any $v \in V[\Omega]$ the function
\begin{equation*}
 w(x):=\left\{\begin{array}{ll} v(x) \ \ \ &\text{for} \ x\in \Omega^c\\
v(x)-u(x)\ \ \ &\text{for} \ x\in \Omega \end{array}\right.
\end{equation*}
is the real-analytic (harmonic) continuation of the potential
$V[\Omega] \big |_{\Omega^c}$
across $\part\Omega$ onto $\Omega\setminus \supp(\mu)$.

In case of null quadrature domains the harmonic continuation $w$ is
an entire analytic function and the Schwarz potential satisfies 
the free boundary problem
\begin{equation}
\label{eq:15}
\left\{\begin{array}{ll}
   \Delta u =-\chi_\Omega \quad & \text{in} \
\R^n \\ u=0 \quad & \text{in} \ \R^n \setminus \overline{\Omega}
       \end{array} \right..
\end{equation}
Accordingly, if for an open set $\Omega$ the problem (\ref{eq:15}) has
a solution $u$, then we call $u$ {\it the Schwarz potential} of the
domain $\Omega$. The following  Proposition relates (\ref {eq:15}) to
the  global analytic (harmonic) continuation of the external Newtonian
potential of $\Omega$.

\begin{proposition}
\label{TM13}
 Let $\Omega = \text{Int}
(\overline{\Omega})$ be an open set in $\R^n$ and set $D :=
R^n\setminus
\overline{\Omega}$. The following properties of $\Omega$ are
equivalent: 
\begin{enumerate} 
\item[{\rm (i)}] $\Omega$ admits the Schwarz potential $u$; 

\item[{\rm (ii)}] for  any $v$ in $ V[\Omega]$, the external potential
$v
\big| _D$ extends to a harmonic function $w$ in whole space;

\item[{\rm (iii)}] for any $v^c$ in $ V[D]$, the internal potential
$v^c
\big| _D$ extends to a real analytic function $w^c$ satisfying the
Poisson equation $\Delta w^c = -1 $ in whole space. 
\end{enumerate}
Under any of these conditions, 
\begin{equation} 
\label{eq20} 
u = v - w= w^c - v^c. 
\end{equation}  
is the Schwarz potential of $\Omega$. 
\end{proposition}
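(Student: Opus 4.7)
The strategy I would take is to exploit the complementary formula (\ref{eq27}) to pivot between potentials of $\Omega$ and of $D$, while treating the core equivalence (i)$\Leftrightarrow$(ii) by the elementary substitution $u = v - w$. Concretely, (i)$\Leftrightarrow$(ii) should follow by direct manipulation of the defining PDE, and (ii)$\Leftrightarrow$(iii) will drop out once one writes $v^c = P - v$ for a quadratic polynomial $P$ satisfying $\Delta P = -1$.

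For (i)$\Rightarrow$(ii), I would assume $u$ satisfies (\ref{eq:15}) and for any $v\in V[\Omega]$ set $w:=v-u$. Since $\Delta v=-\chi_\Omega=\Delta u$, the function $w$ is harmonic on all of $\R^n$; and because $u\equiv 0$ on $\overline{\Omega}^c=D$, one has $w=v$ on $D$, so $w$ is the desired entire harmonic extension of $v|_D$. The ``for any'' quantifier is unproblematic because two representatives of the coset $V[\Omega]$ differ by a polynomial in $\mathcal{H}_2$, which is itself harmonic, so either all representatives extend or none do. For (ii)$\Rightarrow$(i), given such a $w$, I would define $u:=v-w$; then $\Delta u=-\chi_\Omega$ on $\R^n$ and $u=v-w=0$ on $D=\R^n\setminus\overline{\Omega}$, which is exactly (\ref{eq:15}).

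The passage to (iii) uses the complementary formula to write $v+v^c=P$ for any choices of $v\in V[\Omega]$ and $v^c\in V[D]$, with $P(x)=-|x|^2/(2n)+p(x)$, $p\in\mathcal{H}_2$, so that $\Delta P=-1$. Given the harmonic $w$ from (ii), I would set $w^c:=P-w$: it is real analytic on $\R^n$, satisfies $\Delta w^c=-1$, and equals $P-v=v^c$ on $D$. The reverse implication (iii)$\Rightarrow$(ii) is the mirror-image calculation $w:=P-w^c$. The identity (\ref{eq20}) then follows by subtraction:
\[
 w^c-v^c=(P-w)-(P-v)=v-w=u.
\]

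The main obstacle I anticipate is bookkeeping rather than mathematics: one must verify that each construction is independent of the representative chosen in $V[\Omega]$ or $V[D]$, and confirm that $\chi_\Omega+\chi_D=1$ almost everywhere so that the complementary formula applies pointwise and not merely modulo $\mathcal{H}_2$. The hypothesis $\Omega=\text{Int}(\overline{\Omega})$ together with the measure-zero boundary observation preceding the proposition (Lemma 2.11 of \cite{KM}) is precisely what is needed to handle both issues, after which the argument reduces to purely formal manipulation of the defining equations.
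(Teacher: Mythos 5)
Your proposal is correct and follows essentially the same route as the paper: the equivalence (i)$\Leftrightarrow$(ii) via $u=v-w$ and $w=v-u$, with the passage to (iii) and the identity (\ref{eq20}) obtained from the complementary formula (\ref{eq27}). The only difference is that you spell out the (ii)$\Leftrightarrow$(iii) step and the representative-independence/measure-zero bookkeeping explicitly, which the paper dispatches with the single remark that ``the remainder follows from the complementary formula.''
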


\begin{proof} We prove the equivalence (i) $\Leftrightarrow$ (ii) and
the first equality in (\ref{eq20}). The remainder follows from the
complementary formula (\ref{eq27}). If $v \big| _D$ extends to an
entire harmonic function $w$, then $u := v - w$ satisfies
(\ref{eq:15}) and hence it is the Schwarz potential of $\Omega$.
Conversely, if $u$ is the Schwarz potential of $\Omega$, then it
follows from (\ref{eq:15}) that the function  $w := v-u$ is harmonic
in $\R^n$ and equals to $v$ in $D$.
\end{proof}

If the Schwarz potential of $\Omega$ is a Newtonian potential, i.e.
$u \in V[\Omega]$,
then by Proposition \ref {prop:1}, $\Omega$ is a null quadrature
domain. Thus, we have the
characterization of null quadrature domains in terms of Schwarz
potential:

\begin{corollary}[\cite{Ka1,Sa1}]
\label{TM15} An open set $\Omega$ is a null quadrature
domain
if and only if it admits the Schwarz potential $u$ with 
\begin{equation}
\label{eq21}
    u(x) = O(|x|^2\log|x|), \quad\text{as}\ x \to\infty.   
\end{equation}
\end{corollary}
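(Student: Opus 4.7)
The corollary is essentially an assembly of Propositions \ref{thm:4} and \ref{prop:1}, together with the remarks clarifying that ``the Schwarz potential of $\Omega$'' in the sense of (\ref{eq:15}) coincides with the Schwarz potential of the pair $(\Omega,0)$ in the sense of (\ref{eq:33}). The plan is to treat the two implications separately, in each direction invoking one of the two propositions and verifying its hypotheses.

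For the forward implication, suppose $\Omega$ is a null quadrature domain, i.e.\ a quadrature domain of $\mu=0$. Proposition \ref{prop:1}, applied with the $L^\infty$ density $\sigma\equiv 0$, yields the existence of $u\in V[\Omega-0]=V[\Omega]$ which is a Schwarz potential of $(\Omega,0)$; that is, $u$ satisfies (\ref{eq:33}) with $\mu=0$, which is exactly (\ref{eq:15}) (the vanishing of $|\nabla u|$ on $\Omega^c$ is now automatic by the remark at the end of Section \ref{sec:background}). The growth estimate (\ref{eq21}) then comes for free from (\ref{eq26}): since $V[\Omega]=V_2(\chi_\Omega)+\mathcal H_2$, one has
\[
 |u(x)|\le |V_2(\chi_\Omega)(x)|+|p(x)|\le C(1+|x|)^2\log(2+|x|)
\]
for some harmonic polynomial $p$ of degree at most two, giving $u(x)=O(|x|^2\log|x|)$.

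For the converse, suppose $\Omega$ admits a Schwarz potential $u$ with the growth (\ref{eq21}). Then $\Delta u=-\chi_\Omega$ with $\chi_\Omega\in L^\infty(\R^n)$ and $u$ satisfies (\ref{eq26}), so Proposition \ref{thm:4} identifies $u$ as a generalized Newtonian potential of $\chi_\Omega$, i.e.\ $u\in V[\Omega]$. It remains to check that this $u$ is a Schwarz potential of $(\Omega,0)$ in the sense of (\ref{eq:33}), so that Proposition \ref{prop:1} applies and delivers the null quadrature property. The condition $\Delta u=-\chi_\Omega$ is immediate. Since $\chi_\Omega\in L^\infty$, standard regularity gives $u\in C^{1,\alpha}_{\rm loc}$ for every $\alpha<1$, so both $u$ and $\nabla u$ are continuous. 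By (\ref{eq:15}) they vanish on the open set $\R^n\setminus\overline\Omega$, and by continuity they vanish on $\partial\Omega$ as well; invoking the remark that $\partial\Omega$ has zero Lebesgue measure under $\Omega=\mathrm{int}(\overline\Omega)$, the gradient condition in (\ref{eq:33}) on $\R^n\setminus\Omega$ is fulfilled. Proposition \ref{prop:1} now concludes that $\Omega$ is a quadrature domain of $\mu=0$.

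I do not foresee a genuine obstacle here; the only delicate point is bookkeeping between the two slightly different definitions of ``Schwarz potential''---the one in (\ref{eq:15}) requiring only $u=0$ on $\R^n\setminus\overline\Omega$ and the one in (\ref{eq:33}) requiring the stronger $u=|\nabla u|=0$ on $\R^n\setminus\Omega$---which is reconciled by the $C^{1,\alpha}$-regularity of $u$ together with the standing convention $\Omega=\mathrm{int}(\overline\Omega)$.
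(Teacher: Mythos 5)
Your proposal is correct and follows essentially the same route the paper intends: the corollary is assembled from Proposition \ref{prop:1} (with $\mu=0$) and the growth characterization of generalized potentials in Proposition \ref{thm:4} together with estimate (\ref{eq26}), exactly as in the discussion preceding Corollary \ref{TM15}. Your extra bookkeeping reconciling (\ref{eq:15}) with (\ref{eq:33}) via $C^{1,\alpha}$ regularity and the convention $\Omega=\mathrm{int}(\overline{\Omega})$ is the same point the paper disposes of in the remark closing Section \ref{sec:background}.
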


We discuss now the place of null quadrature domains in a larger
class  of domains having the Schwarz potential (\ref{eq:15}). This
investigation was initiated by Shapiro in \cite {Shp2}. Note that if
$\Omega$ admits a Schwarz potential which does not satisfy
(\ref{eq21}), then $\Omega$ is not a  null quadrature domain.
Gustafsson and Shapiro \cite {Shp2}, and independently Richardson
\cite{Ri}, constructed two dimensional domains $\Omega$ with 
Schwarz potential having any prescribed singularity at infinity.
The theorem below extends their results to arbitrary dimension.

\begin{theorem}
\label{TM18} 
There exist domains $\Omega \subset \R^n ~~ (n \geq 2)$ with
the bounded complement and such that their Schwarz potential $u$
has any prescribed type of singularity at infinity, i.e. for any
$m>2$ and for any  harmonic polynomial $h_m$ of degree $m$
there are $\Omega$ with $u(x) = h_m(x) + O(|x|^2)$ as $x \to \infty$;
and there are $\Omega$ with $u$ having essential singular point
at infinity.
\end{theorem}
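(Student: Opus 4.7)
The plan is to reduce the construction to the inverse problem of potential theory and invoke Isakov's ``in the small'' solvability theorem \cite{Is}. By Proposition \ref{TM13}(iii), a bounded set $D=\R^n\setminus\overline{\Omega}$ is such that $\Omega$ admits a Schwarz potential if and only if the internal potential $V[D]\big|_D$ (with $V[D]$ the ordinary Newtonian potential of the bounded set $D$) extends to an entire real-analytic function $w^c$ solving $\Delta w^c=-1$ on $\R^n$; by (\ref{eq20}), in that case $u=w^c-V[D]$.

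Write $w^c(x)=-|x|^2/(2n)+H(x)$ with $H$ entire harmonic. Since $V[D]$ decays at infinity when $n\geq 3$ (and grows only logarithmically when $n=2$), the prescribed singularity $u(x)=h_m(x)+O(|x|^2)$ is equivalent to $H(x)=h_m(x)+O(|x|^2)$ at infinity, and Liouville's theorem applied to the entire harmonic function $H-h_m$ forces $H=h_m+Q$ for some $Q\in\mathcal{H}_2$. Thus the theorem is proved once one exhibits a bounded $D$ whose Newtonian potential equals the prescribed target
\[
g(x):=-\frac{|x|^2}{2n}+h_m(x)+Q(x),\qquad x\in D,
\]
for some choice of $Q\in\mathcal{H}_2$; note $\Delta g=-1$ automatically.

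To produce such $D$, I would apply Isakov's ``in the small'' theorem with reference domain a small ball $B_\eps$, on which $V[B_\eps](x)=-|x|^2/(2n)+c_\eps$ for an explicit constant $c_\eps=O(\eps^2)$. Choosing $Q\equiv c_\eps$ makes the perturbation $g-V[B_\eps]=h_m$ on $B_\eps$, whose $C^k$-norm is $O(\eps^{m-k})$ by the homogeneity of $h_m$. For $\eps$ small enough (permissible since $m>2$), this falls into the smallness regime of \cite{Is}, and the theorem supplies a bounded domain $D$, a small perturbation of $B_\eps$ with real-analytic boundary, satisfying $V[D]=g$ on $D$; the associated $\Omega=\R^n\setminus\overline{D}$ then has Schwarz potential with leading term $h_m$ at infinity. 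For the essential-singularity case I would take
\[
H(x)=\sum_{k\geq 3}c_k p_k(x),
\]
where $p_k$ are normalized homogeneous harmonic polynomials of degree $k$ and the coefficients $c_k>0$ are chosen (e.g.\ $c_k=1/k!$) so that $H$ is entire harmonic of infinite order at infinity; then $H-H(0)$ is small on $B_\eps$ for small $\eps$, and the same Isakov argument (with $Q$ absorbing $H(0)$) yields the required $D$.

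The main obstacle is the precise application of Isakov's result: one has to verify that $g-V[B_\eps]$ is small in the specific function space (typically a H\"older space on $\overline{B_\eps}$) in which Isakov's implicit-function-theorem argument is carried out, and that the resulting perturbed domain $D$ is bounded with sufficiently regular boundary. The scaling by $\eps$ is what elevates a purely local solvability statement into a construction accommodating arbitrarily large degree $m$; in the essential-singularity case one additionally has to check that the infinite harmonic expansion of $H$ does not spoil the smallness needed by Isakov's theorem, which again is handled by shrinking $\eps$.
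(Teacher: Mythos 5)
Your overall strategy coincides with the paper's: reduce via Proposition \ref{TM13} to the interior inverse problem of potential theory and invoke the ``in the small'' solvability theorem (Theorem \ref{TM19}, from \cite{Is}) to perturb a ball. The difference is in how the required smallness is produced, and as written your version has a genuine gap at exactly the step you yourself flag as ``the main obstacle.'' In Theorem \ref{TM19} the admissible perturbation size $\delta$ depends on the reference domain $D_0$ (and on $\eps$ and $r$). You take $D_0=B_\eps$ and let $\eps\to 0$; the threshold is then $\delta=\delta(B_\eps,\dots)$, which may itself shrink as $\eps\to 0$, possibly faster than the size $O(\eps^{m})+O(\eps^{m-1})$ of $h_m$ and $\nabla h_m$ on $\partial B_\eps$. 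So ``for $\eps$ small enough the perturbation falls into the smallness regime'' is not justified by the theorem as quoted: you compare a quantity tending to $0$ with an $\eps$-dependent threshold over which you have no control. The same objection applies to your essential-singularity construction.

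The gap can be closed in two ways. The paper keeps the reference ball fixed ($D_0=B_1$, so $\delta$ is a fixed constant) and shrinks the harmonic perturbation instead: with $M=\max_{|x|=1}(|h|+|\nabla h|)$ it takes $w=c_n-\frac{|x|^2}{2n}+\frac{\delta}{2M}h$, obtaining a Schwarz potential whose leading behaviour is $\frac{\delta}{2M}h$ --- a small nonzero multiple of $h_m$ (or a non-polynomial entire harmonic function), which is all the ``prescribed type of singularity'' statement requires, and which treats the polynomial and essential-singularity cases uniformly. Alternatively, your scaling idea can be made rigorous by using the exact scaling law of interior potentials: if $D_1$ has interior potential $w$, then $\eps D_1$ has interior potential $\eps^2 w(\cdot/\eps)$. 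Rescaling your target to the unit ball turns the perturbation $h_m$ into a constant plus $\eps^{m-2}h_m$ on $B_1$, which lies below the \emph{fixed} threshold $\delta(B_1)$ precisely because $m>2$ (note this is where $m>2$ really enters, not in your $O(\eps^{m-k})$ count); applying Theorem \ref{TM19} on $B_1$ and scaling back produces a domain near $B_\eps$ whose interior potential is exactly $-\frac{|x|^2}{2n}+h_m+\mathrm{const}$, so the leading term is exactly $h_m$, slightly stronger than what the paper's normalization gives. Either repair works; without one of them the appeal to Isakov's theorem with a shrinking reference ball does not go through. (Minor point: for $n=2$ the constant $c_\eps$ contains a $\log\eps$ term and is not $O(\eps^2)$, but this is harmless since it is absorbed into $Q$.)
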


The methods of Gustafsson and Shapiro and of Richardson are based on a
perturbation of a disc
by special conformal mappings and hence are essentially
two-dimensional. In the proof
for the general case we use the same idea of the perturbation of a 
ball, but we reduce the existence
of $\Omega$, with the desired properties of the  Schwarz potential, to a theorem
on the existence "in the
small" (i.e. near to a given domain $\Omega_0$) of the solution to the inverse problem of
potential. The first existence theorem of this type was proved by
Sretenskii \cite{Sr1} for
the external potential and for $\Omega_0$ being a ball. Ivanov \cite{Iv1} extended the theorem
of Sretenskii to starlike domains by a modification of the techniques of Lichtenstein from
the equilibrium figures theory. For $n=2$ the corresponding results due to Cherednichenko
(see references in \cite{Ch}). Further generalizations were obtained by
Prilepko and Isakov \cite {Pr, Is}, and they  rely on Ivanov's arguments. The
existence theorem below for
the interior inverse problem of potential is a special case of Corollary 5.1.3
from \cite{Is}.
\begin{theorem}\label{TM19} Let $D_0$ be a bounded domain in $\R^n ~~
(n \geq 2)$ with
Newtonian potential $w_0$ and with $\part D_0 \in C^{2, \lambda}$. Fix arbitrarily $\eps > 0$
and denote by $L_{D_0, \eps}$ the set of all functions $w$ satisfying
the  Poisson equation 
\[
    \Delta w = -1 \quad \text{in} \quad D_{\eps} :=\{x :
\text{dist}(x, D_0) < \eps \}.
\]
Then for any $r>0$, there is $\delta>0$ such that for $w \in L_{D_0,
\eps}$ satisfying the condition
\begin{equation}
\label{eq24}
    |w(x)-w_0(x)| + |\nabla w(x) - \nabla w_0(x)| < \delta
 \quad \text{on} \quad \part D_0,
\end{equation}
there is a  domain $D$, diffeomorphic to $D_0$, having the regular analytic
boundary lying
in the $r-$neighborhood of $\part D_0$ and such that the interior Newtonian potential of $D$ 
equals to $w \big | _D$. There
exists $r_0 = r_0(D_0, \eps)$ such that for $r<r_0$ the domain $D$ satisfying
the above conditions is unique.
\end{theorem}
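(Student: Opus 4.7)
The plan is to cast the theorem as an application of the implicit function theorem, following the Lichtenstein--Ivanov--Sretenskii line of attack on inverse potential problems. Parametrize candidate domains as normal perturbations of $D_0$: for $t\in C^{2,\la}(\part D_0)$ of small norm, set $\Phi_t(x)=x+t(x)\nu(x)$ and let $D_t$ be the domain enclosed by $\Phi_t(\part D_0)$. For $t$ small enough $D_t\subset D_\eps$, so both $w$ and the Newtonian potential $N_{D_t}$ of $D_t$ satisfy $\Delta u=-1$ in $D_t$. By uniqueness for the Dirichlet problem, the identity ``interior potential of $D_t$ equals $w|_{D_t}$'' is equivalent to the boundary equation $(N_{D_t}-w)|_{\part D_t}=0$. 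Pulling back via $\Phi_t$, define the nonlinear operator
\[
F(t,w):=(N_{D_t}-w)\circ \Phi_t \quad\text{on } \part D_0;
\]
the task is to solve $F(t,w)=0$ for $t$ near $0$, given $w$ near $w_0$, with $F(0,w_0)=0$ as the base point.

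A standard shape-derivative calculation for the Newtonian potential, together with the identity $\nabla N_{D_0}=\nabla w_0$ on $\part D_0$, yields
\[
\part_t F\bigl|_{(0,w_0)}\tau(x)=\int_{\part D_0} J(x-y)\,\tau(y)\,dS(y)=(S\tau)(x),
\]
so the linearization at the base point is precisely the single-layer potential operator $S$ on $\part D_0$. Classical potential theory on $C^{2,\la}$ surfaces (G\"unter, Kellogg, Mikhlin) shows that $S$ is Fredholm of index zero between matching H\"older scales on $\part D_0$, and is an isomorphism for $n\geq 3$ (modulo the one-dimensional kernel of constants in the $n=2$ case, to be removed by a gauge condition on $t$). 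One further verifies that $(t,w)\mapsto F(t,w)$ is a $C^1$ map between appropriate Banach spaces, with derivative at $(0,w_0)$ the operator $S$ just identified.

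With these ingredients the implicit function theorem produces a $C^1$ solution map $w\mapsto t(w)$ on a $\delta$-ball around $w_0$ (smallness measured in the norm of (\ref{eq24})), yielding a domain $D:=D_{t(w)}$ whose interior Newtonian potential equals $w$ on $D$; the uniqueness clause of the implicit function theorem gives the uniqueness of $D$ within the $r_0$-neighborhood of $\part D_0$. Real-analyticity of $\part D$ then follows from free boundary regularity: the function $U:=w-N_D$ satisfies $U=0$ in $D$, $\Delta U=-1$ in $D_\eps\setminus \overline{D}$, and $U=|\nabla U|=0$ on $\part D$; since $w$ is real analytic in $D_\eps$, the Caffarelli and Kinderlehrer--Nirenberg reflection apparatus upgrades the $C^{2,\la}$ boundary to real analytic.

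The principal difficulty is calibrating the function spaces so that $S$ is both bounded and invertible as a linear map between them and so that $F$ is genuinely $C^1$: the single-layer operator gains one derivative on the H\"older scale, so the spaces for $t$ and for the range of $F$ must be shifted accordingly, while the smallness condition (\ref{eq24}) only controls $w-w_0$ and its gradient on $\part D_0$. Bridging this gap requires exploiting the interior analyticity of $w$ (from $\Delta w=-1$ in $D_\eps$) together with elliptic estimates to promote the boundary smallness to higher H\"older norms; this is, in my view, the key technical step.
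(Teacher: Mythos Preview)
The paper does not prove Theorem~\ref{TM19}; it is quoted verbatim as ``a special case of Corollary 5.1.3 from \cite{Is}'' and used as a black box in the proof of Theorem~\ref{TM18}. So there is no proof in the paper to compare your attempt against.

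That said, your outline is very much in the spirit of the sources the paper cites: the Lichtenstein--Sretenskii--Ivanov approach via normal perturbations and the implicit function theorem, with the single-layer potential appearing as the linearization, is exactly the machinery underlying \cite{Iv1}, \cite{Pr}, and \cite{Is}. The identification $\partial_t F|_{(0,w_0)}=S$ is correct (here $N_{D_0}\equiv w_0$, so the transport term from $\Phi_t$ drops out), and the Fredholm/invertibility properties of $S$ on $C^{2,\lambda}$ surfaces are classical.

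The gap you yourself flag is real, and your proposed fix does not close it. Condition~(\ref{eq24}) controls only the Cauchy data of the \emph{harmonic} function $w-w_0$ on the hypersurface $\partial D_0$, and the Cauchy problem for $\Delta$ is ill-posed: smallness of $(w-w_0,\nabla(w-w_0))$ on $\partial D_0$ does \emph{not} imply smallness of $w-w_0$ in any $C^{k,\alpha}$ norm on a fixed tubular neighborhood, so ``interior analyticity together with elliptic estimates'' cannot by itself promote (\ref{eq24}) to the Banach-space smallness the implicit function theorem requires. The actual arguments in \cite{Iv1,Pr,Is} handle this by working directly with the nonlinear boundary integral equation and estimating its modulus of continuity in terms of the Cauchy data, rather than by an abstract bootstrap; alternatively one reformulates so that the parameter space for $w$ is the Cauchy data itself, and the map $F$ is built from layer potentials whose continuity in that data is proved by hand. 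Your treatment of the $n=2$ kernel of $S$ is also too casual: the kernel (when present) is the equilibrium density, not the constants, and one must either rescale $D_0$ away from logarithmic capacity one or impose a mean-zero constraint matched to a translation gauge.
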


{\it Proof of Theorem \ref {TM18}}. 
Let $D_0 = B_1(0)$ be the unit
ball. The interior potential of $D_0$ is given by the known formula 
\[w_0(x) = c_n - \frac {|x|^2} {2n} \] 
where $c_2 = 1/4$ and $c_n = 1
/2(n-2)$ for $n>2$.
Let $\eps, r >0$, and choose $\delta$ by Theorem
\ref{TM19}. Take an arbitrary entire harmonic function $h$ and set 
\[M := \max _{|x|=1} ~ (|h(x)| + |\nabla h(x)|). \] 
Then the function 
\[ w(x) := c_n - \frac {|x|^2} {2n} + \frac {\delta}{2M} h(x) \]
is a solution of Poisson equation $\Delta w = -1$ in whole space,
satisfying (\ref{eq24}), so by Theorem \ref {TM19} there is a domain
$D$ diffeomorphic to the ball with the interior potential $w$. Denote
by $v$ the Newtonian potential of $D$. By Proposition \ref{TM13} we
conclude that the function $u := w-v$ is the Schwarz potential of
$\Omega := \R^n \setminus \overline{D} ~$. If $h$ is a harmonic
polynomial of degree $m>2$, then
$u(x) = {\delta \over 2M}h(x) + O(|x|^2)$
as $x \to \infty ~$. If $h$ is a non-polynomial entire harmonic function,
then $u$ has an essential singular point at infinity. $\square$

\begin{remark}[Shapiro \cite{Shp2}]
\label{rm3} 
If $\Omega_k$ is a domain in $\R^k ~~(k < n)$
chosen by Theorem \ref{TM18}, then the Schwarz potential of the
cylinder $\Omega := \Omega_k\times \R^{n-k}$ has the same properties
as that of $\Omega_k$, but $\Omega$ has unbounded complement.
\end{remark}

The following result of Shapiro \cite{Shp2} (see also Shahgholian
\cite{Shg}) shows that if
the complement of $\Omega$ is slightly larger  than any cylinder over a
bounded domain at 
infinity, and if the Schwarz potential of $\Omega$ is a
 {\it tempered distribution},
then $\Omega$ is a null quadrature domain.

\begin{proposition}\label{TM20} Suppose $\Omega$ admits the Schwarz 
potential $u$, and
\begin{enumerate}
    \item[{\rm (i)}] $\R^n \setminus \overline{\Omega}$ contains 
balls of arbitrary radius,
    \item[{\rm (ii)}] $u$ is a tempered distribution.
\end{enumerate}
\noindent Then $\Omega$ is a null quadrature domain.
\end{proposition}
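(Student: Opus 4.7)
The plan is to reduce to Corollary~\ref{TM15} by showing that the Schwarz potential $u$ lies in $V[\Omega]$; Proposition~\ref{prop:1} will then yield that $\Omega$ is a null quadrature domain. Fix the canonical representative $v:=V_2(\chi_\Omega)\in V[\Omega]$, which by~(\ref{eq26}) satisfies $|v(x)|\le C(1+|x|)^2\log(2+|x|)$ and in particular is tempered. The difference $w:=v-u$ is harmonic on $\R^n$ (both terms solve $\Delta(\cdot)=-\chi_\Omega$) and tempered by hypothesis~(ii); since any tempered harmonic distribution is a polynomial ($|\xi|^2\widehat w=0$ forces $\mathrm{supp}\,\widehat w\subset\{0\}$), we have $w=P$ for some polynomial. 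Because $V[\Omega]=v+\mathcal{H}_2$, the assertion $u=v-P\in V[\Omega]$ is equivalent to $\deg P\le 2$, so everything reduces to bounding the degree of $P$.

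To do this I invoke condition~(i). Pick balls $B_{r_k}(x_k)\subset\R^n\setminus\overline\Omega$ with $r_k\to\infty$; on each such ball $u\equiv 0$, whence $P=v$. Writing $P_m$ for the top-degree homogeneous part of $P$, $m:=\deg P$, and choosing $\theta\in S^{n-1}$ with $|P_m(\theta)|=\max_{S^{n-1}}|P_m|$, the classical Chebyshev inequality applied to the one-variable polynomial $t\mapsto P(x_k+t\theta)$ on $[-r_k,r_k]$ yields
\[
  \sup_{B_{r_k}(x_k)}|P|\;\ge\;c_P\,r_k^{m},
\]
with $c_P>0$ depending only on $P$. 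On the other hand, since $P=v$ on the ball, the global bound on $v$ gives
\[
  \sup_{B_{r_k}(x_k)}|P|=\sup_{B_{r_k}(x_k)}|v|\le C(1+|x_k|+r_k)^2\log(2+|x_k|+r_k).
\]
Along any subsequence with $|x_k|=O(r_k)$, combining forces $r_k^{m-2}=O(\log r_k)$, which is impossible for $m\ge 3$ as $r_k\to\infty$; hence $m\le 2$.

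The main obstacle is that condition~(i) does not a priori control the centers $x_k$, so the balls it provides may drift out with $|x_k|/r_k\to\infty$ and defeat the direct Chebyshev estimate. I would handle this residual case by a blow-down: rescale by $\lambda_k:=\max(r_k,1+|x_k|)$ so that the rescaled balls $B_{r_k/\lambda_k}(x_k/\lambda_k)\subset\overline{B_1(0)}$ cannot escape to infinity. The $v$-part of $u_k(y):=\lambda_k^{-m}u(\lambda_k y)$ scales as $\lambda_k^{2-m}\log\lambda_k\to 0$ for $m\ge 3$, while $\lambda_k^{-m}P(\lambda_k y)\to P_m(y)$ locally uniformly, hence $u_k\to -P_m$. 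Passing to a subsequence so that the vanishing regions of $u_k$ accumulate on an open piece of $\overline{B_1(0)}$ (the borderline case $r_k/\lambda_k\to 0$ is ruled out by iterating the Chebyshev estimate on the scale $\lambda_k$), the harmonic polynomial $P_m$ vanishes on an open set and therefore identically, contradicting $m\ge 3$. Thus $\deg P\le 2$, giving $u\in V[\Omega]$, and Proposition~\ref{prop:1} completes the proof.
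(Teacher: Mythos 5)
First, note that the paper does not prove Proposition \ref{TM20} at all: it is quoted from Shapiro \cite{Shp2} (see also Shahgholian \cite{Shg}), so there is no in-paper argument to compare with. Your opening reduction is certainly the right skeleton and is standard: since $u$ is tempered and $\Delta(V_2(\Omega)-u)=0$, the difference is a harmonic polynomial $P$, and everything comes down to showing $\deg P\le 2$, after which Corollary \ref{TM15} (or Proposition \ref{prop:1}) finishes the proof. Up to that point your argument is correct.

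The gap is in the degree bound, and it is exactly the case you flag as the ``main obstacle.'' Hypothesis (i) gives balls $B_{r_k}(x_k)\subset\R^n\setminus\overline\Omega$ with $r_k\to\infty$ but with \emph{no} control on $|x_k|$ relative to $r_k$. Your Chebyshev estimate only yields
\[
c_P\,r_k^{m}\;\le\;C\,(1+|x_k|+r_k)^2\log(2+|x_k|+r_k),
\]
which is perfectly consistent with $m\ge 3$ whenever $r_k\lesssim |x_k|^{2/m}$ (e.g. $r_k\sim\log|x_k|$), so the ``drifting'' regime is the whole difficulty, not a residual case. The blow-down you propose does not repair it: when $r_k/\lambda_k\to 0$ the rescaled vanishing balls shrink to points of the unit sphere, so the limit $u_k\to -P_m$ only gives $P_m(y_0)=0$ (and at best $\nabla P_m(y_0)=0$, if one justifies $C^1$ convergence) at an accumulation point $y_0\in S^{n-1}$; a homogeneous harmonic cubic can vanish to second order at points of the sphere (e.g. $P_3(x)=\mathrm{Re}(x_1+ix_2)^3$ in $\R^n$, $n\ge3$, along the $x_3$-axis), so no contradiction follows. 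The parenthetical claim that this borderline case is ``ruled out by iterating the Chebyshev estimate on the scale $\lambda_k$'' is unsupported and does not work as stated: Chebyshev can only be applied on segments contained in the set where $u$ vanishes, i.e. of length $2r_k$, not $\lambda_k$, and nothing in your data prevents the inequalities above from holding for all $k$. In other words, the only inputs you use (vanishing of $u$ on the given balls plus the growth bound (\ref{eq26}) for $V_2(\Omega)$ and its gradient) are in principle compatible with $\deg P=3$, so additional structural information --- that $u$ vanishes on the entire unbounded complement, or finer potential-theoretic estimates as in Shapiro's and Shahgholian's proofs --- is genuinely needed to conclude $\deg P\le2$.
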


Shapiro \cite{Shp2} showed that condition (ii) in Proposition 
\ref{TM20} is indispensable.  He constructed an example where $\Omega$
and
$\Omega^c$ are ``equally large'', that is, both contain half-spaces.
\begin{proposition} \label{TM21} There exist domains 
$\Omega= \{(x_1,...,x_{n-1},x_n)\in \R^n : x_n < f(x_1,...,x_{n-1})\}$
such that
\begin{enumerate}
    \item[{\rm (i)}] $f$ is a bounded real analytic function in $\R^{n-1}$,
    \item[{\rm (ii)}] $\Omega$ admits the Schwarz potential that is not a tempered distribution.
\end{enumerate}
\end{proposition}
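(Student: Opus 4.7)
The plan is to reduce to dimension $n = 2$ by a cylinder construction that preserves subgraph form, and then to build the two-dimensional example by solving an inverse potential problem in the spirit of Gustafsson--Shapiro and Richardson.

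Cylinder reduction. Suppose we have produced a 2D example: a domain $\Omega_2 = \{(s,t) \in \R^2 : t < g(s)\}$ with $g$ bounded real analytic on $\R$ and a Schwarz potential $u_2$ that is not a tempered distribution on $\R^2$. Define
\[
\Omega := \{(x_1, \ldots, x_n) \in \R^n : x_n < g(x_1)\},
\]
of the required form with $f(x_1, \ldots, x_{n-1}) := g(x_1)$, which is bounded and real analytic on $\R^{n-1}$. Then $u(x) := u_2(x_1, x_n)$, constant in $x_2, \ldots, x_{n-1}$, satisfies $\Delta u = -\chi_{\Omega}$ in $\R^n$ and $u = |\nabla u| = 0$ on $\part\Omega$, so $u$ is the Schwarz potential of $\Omega$. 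Pairing $u$ with tensor-product Schwartz functions $\phi_1(x_1,x_n)\phi_2(x_2,\dots,x_{n-1})$ shows that temperedness of $u$ on $\R^n$ would force temperedness of $u_2$ on $\R^2$; hence $u$ is not tempered. The proposition is thereby reduced to the case $n = 2$.

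Two-dimensional construction. Write $D_2 := \R^2 \setminus \overline{\Omega_2} = \{t \geq g(s)\}$. By Proposition \ref{TM13}(iii), $\Omega_2$ admits a Schwarz potential if and only if some $v^c \in V[D_2]$ agrees on $D_2$ with an entire function $w^c$ on $\R^2$ satisfying $\Delta w^c = -1$; then $u_2 = w^c - v^c$. The strategy is to prescribe $w^c$ and recover $g$ by an inverse potential problem. Take $h(s, t) := e^{-t}\sin s$, entire harmonic, bounded on every upper half-plane $\{t \geq a\}$ (so polynomially bounded on $D_2$, consistent with the growth bound for $V[D_2]$ from Proposition \ref{thm:4}), yet growing exponentially as $t \to -\infty$, hence not tempered on $\R^2$. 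Set $w^c(s, t) := -t^2/2 + \varepsilon h(s, t)$ for small $\varepsilon > 0$, so $\Delta w^c = -1$. One seeks a bounded real analytic $g$ close to $0$ such that the generalized Newtonian potential of $D_2 := \{t \geq g(s)\}$ coincides on $D_2$ with $w^c$. This is the analog, for the unbounded base $D_0 = \{t \geq 0\}$ and subgraph perturbations, of Theorem \ref{TM19}. Once such $g$ is produced, $u_2 := w^c - v^c$ is the Schwarz potential of $\Omega_2$; because $v^c$ has at most $O(|z|^2 \log|z|)$ growth while $w^c$ grows like $e^{-t}$ as $t \to -\infty$ inside $\Omega_2$, $u_2$ is not a tempered distribution.

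Main obstacle. The technical heart of the argument is the unbounded-base analog of Theorem \ref{TM19}: given $w^c$ close to $-t^2/2$ in a strip around $\{t = 0\}$, produce a bounded real analytic graph $\{t = g(s)\}$ whose generalized Newtonian potential matches $w^c$ on the side $\{t > g(s)\}$. The bounded-base theorem of Isakov \cite{Is} is proved by a Lichtenstein-type implicit function argument in H\"older spaces on a fixed reference domain; its adaptation to the unbounded subgraph setting demands weighted norms that uniformly control the $C^{2,\lambda}$-size of $g$ in $s$ while accommodating the unbounded but polynomial part $-t^2/2$ of $w^c$. Setting up the appropriate spaces and verifying that the non-tempered perturbation $\varepsilon h$ lies in the admissible range is the main work; once this is done, the cylinder reduction and an application of Proposition \ref{TM13} complete the proof.
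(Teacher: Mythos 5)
Your skeleton (cylinder reduction to $n=2$, recasting the problem via Proposition \ref{TM13} as prescribing the entire continuation $w^c$ of the internal potential of $D_2=\Omega_2^c$, and the choice $w^c=-t^2/2+\varepsilon e^{-t}\sin s$) is sensible and consistent with how the paper handles such examples (cf.\ Remark \ref{rm3}). But the step you label the ``main obstacle'' is precisely the proof, and it is missing: you need a bounded real analytic $g$ such that the generalized potential of $\{t\geq g(s)\}$ equals $w^c$ there, and you propose to get it from an ``unbounded-base analog of Theorem \ref{TM19}.'' No such theorem is available: Isakov's Corollary 5.1.3 (\cite{Is}), like the Sretenskii--Ivanov--Prilepko results it extends, is formulated and proved for \emph{bounded} $D_0$ with $C^{2,\lambda}$ boundary; the Lichtenstein-type implicit-function scheme uses compactness of the boundary and of the Newtonian potential operator, and for a half-plane base one would have to rebuild the whole framework (including formulating the problem correctly, since $V[D]$ is only defined modulo $\mathcal{H}_2$ and the data $-t^2/2$ is unbounded). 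Moreover your perturbation $\varepsilon e^{-t}\sin s$ does \emph{not} decay along the boundary as $s\to\pm\infty$, so even a weighted-space version designed for decaying perturbations of the half-plane would not cover it. As written, the proposal defers the entire difficulty to an unproved existence theorem that is at least as hard as the proposition itself. (Note also that the paper gives no proof of Proposition \ref{TM21}; it is Shapiro's example from \cite{Shp2}.)

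A route that closes the gap, and is the natural two-dimensional analogue of the conformal-map constructions the paper invokes for Theorem \ref{TM18}, avoids the inverse problem altogether: for $0<\varepsilon<1$ the map $\phi(\zeta)=\zeta+\varepsilon e^{-i\zeta}$ is univalent on the lower half-plane (since $|\phi'-1|\leq\varepsilon<1$ there) and maps it onto a domain $\Omega_2=\{t<g(s)\}$ with $g$ bounded and real analytic, because the boundary curve is $s=\xi+\varepsilon\cos\xi$, $t=-\varepsilon\sin\xi$ and $\xi\mapsto\xi+\varepsilon\cos\xi$ is an analytic diffeomorphism of $\R$. The Schwarz function of $\partial\Omega_2$ is $S(z)=\phi^{*}(\phi^{-1}(z))$ with $\phi^{*}(\zeta)=\zeta+\varepsilon e^{i\zeta}$, hence extends holomorphically to all of $\Omega_2$ and grows like $e^{-t}$ as $t\to-\infty$; integrating $4\partial_z u_2=S(z)-\bar z$ produces the Schwarz potential of $\Omega_2$ with the same exponential growth. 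Your cylinder reduction then applies verbatim. One more caution on the last step: pointwise exponential growth by itself does not preclude being a tempered distribution (e.g.\ $e^{t}\sin(e^{t})$ is tempered); to conclude non-temperedness, pair $u_2$ with translates $\varphi(s,t+k)$ of a fixed bump chosen so that the pairing with the exponential term is nonzero, and observe that these pairings grow like $e^{k}$ while the Schwartz seminorms of the translates grow only polynomially in $k$, the quadratic and potential parts contributing only polynomially. With that argument your non-temperedness claim, and hence the proposition, is secured.
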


\section{Geometric properties of null quadrature domains}
\label{sec:gpnqd} 

In this section we prove that the complement of a null quadrature domain is
a convex set with analytic boundary. This result was conjectured by H.S. Shapiro
and proved by him under some additional conditions \cite{Shp2}
(see also \cite{CKS} and \cite{Ou}).

The main result of this section is the following:
\begin{theorem}
 \label{thm:1} Let $\Omega$ be a null quadrature domain with Schwarz 
potential $u$.
 Then
\begin{enumerate}
 \item[{\rm (i)}] there is a positive constant $C$ such that
 \[
    |u(x)|\leq C(1+|x|)^2, \quad x \in \R^n;
\]
 \item[{\rm (ii)}] $u \leq 0$ in $\R^n$;
 \item[{\rm (iii)}] $\part^2 u/\part e^2\leq 0$ in $\R^n$ for any unit
vector $e$;
  \item[{\rm (iv)}] the complement of $\Omega$ is a convex set and
$\part \Omega$ is an analytic surface.
\end{enumerate}
\end{theorem}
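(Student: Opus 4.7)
The plan is to reduce everything to the quadratic bound (i); once that is established, Theorem II of \cite{CKS} applied directly to the free boundary problem (\ref{eq:intro:1}) yields the non-positivity (ii), the sign of the second directional derivatives (iii), and the convexity together with real analyticity of the boundary (iv). Because $\Omega$ is a null quadrature domain, Corollary \ref{TM15} gives $u \in V[\Omega]$, so the a priori bound (\ref{eq26}) furnishes $|u(x)| \leq C(1+|x|)^2 \log(2+|x|)$; the entire difficulty is to remove the logarithmic factor.

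The first step is to exploit the algebraic identity special to null quadrature domains. By Theorem \ref{thm:8}, every $v^c \in V[D]$, where $D := \R^n \setminus \overline{\Omega}$, coincides with a quadratic polynomial $q$ on $D$. Applying Proposition \ref{TM13}(iii), $v^c|_D$ extends to a real analytic function $w^c$ with $\Delta w^c = -1$ on all of $\R^n$; since $w^c$ and $q$ are real analytic and agree on the open set $D$, unique continuation forces $w^c \equiv q$. Formula (\ref{eq20}) then produces the clean representation $u = q - v^c$, so proving (i) is equivalent to showing that the leading coefficient of the logarithmic term in the asymptotic expansion of $v^c$ vanishes; for potentials of cones this coefficient is precisely the $P_2$ appearing in (\ref{eq:30}), and the goal is to rule it out for the asymptotic cone of $D$.

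I would then carry out a blow-down argument modelled on the one in \cite{CKS}. Set $M(R) := \sup_{B_R}|u|$ and suppose, for contradiction, $M(R_j)/R_j^2 \to \infty$ along some $R_j \to \infty$. The rescalings $\tilde u_j(x) := u(R_j x)/M(R_j)$ satisfy $\Delta \tilde u_j = -(R_j^2/M(R_j))\chi_{\Omega/R_j} \to 0$, with $\|\tilde u_j\|_{L^\infty(B_1)} = 1$; the a priori estimate controls $\|\tilde u_j\|_{L^\infty(B_k)}$ by $M(kR_j)/M(R_j)$, which, in the worst case $M(R)\sim R^2 \log R$, remains bounded by $O(k^2)$. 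By elliptic compactness a subsequence converges locally uniformly to a nonzero harmonic polynomial $\tilde u_\infty$ of degree at most two, vanishing on the Kuratowski limit of $D/R_j$. When $D$ has positive upper Lebesgue density at infinity the limiting set has positive measure, forcing the polynomial $\tilde u_\infty$ to vanish identically and producing the desired contradiction.

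The main obstacle is the case in which $D$ has zero upper density at infinity, so that the blow-down of $D$ may degenerate to a lower dimensional cone on which a nonzero quadratic harmonic polynomial can legitimately vanish. Here Shapiro's quasi-balayage \cite{Shp5} is the essential device: it allows $\chi_D$ to be replaced by a more convenient measure (supported nearer the boundary or concentrated along the vertex of the asymptotic cone) producing the same potential modulo $\mathcal{H}_2$ in $\Omega$, and combined with the explicit cone formula (\ref{eq:30}) and the sharp estimates for the generalized Newtonian potential in \cite{KM}, this identifies the asymptotic behavior of $v^c$ precisely enough to force $P_2 \equiv 0$. With (i) in hand, \cite[Theorem II]{CKS} immediately delivers (ii), (iii), and (iv).
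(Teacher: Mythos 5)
Your overall reduction is the same as the paper's: establish the quadratic bound (i) and then quote \cite[Theorem II]{CKS} for (ii)--(iv), using the representation $u=P-v^c$ coming from Proposition \ref{TM13} and Theorem \ref{thm:8}. The gap is in the proof of (i) itself, in both of your cases. In the thick case, your blow-down is not controlled: from the a priori bound (\ref{eq26}) you only get $\sup_{B_k}|\tilde u_j|=M(kR_j)/M(R_j)\leq Ck^2\log(kR_j)\,R_j^2/M(R_j)$, and since the contradiction hypothesis only gives $M(R_j)/R_j^2\to\infty$ (for instance $M(R_j)\sim R_j^2\sqrt{\log R_j}$ is not excluded), this ratio need not be $O(k^2)$, so the claimed compactness fails without a careful choice of scales. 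More seriously, the positive upper density (\ref{eq:13}) is only known along \emph{some} sequence $\rho_k\to\infty$, which has no reason to coincide with your sequence $R_j$; so ``the limiting set has positive measure'' is unjustified at the blow-down scales. This mismatch of scales is exactly what the paper identifies as the main difficulty: quasi-balayage (Theorem \ref{thm:2}, which \emph{requires} the density lower bound (\ref{eq:14})) gives the quadratic bound only along a subsequence of dyadic radii, and the whole content of Lemma \ref{lem:2} is to propagate it to all radii, using the Alt--Caffarelli--Friedman monotonicity formula (Theorem \ref{thm:ACF}) together with the local $C^{1,1}$ bound of \cite[Theorem I]{CKS} to compare different scales. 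Your proposal contains no substitute for this step.

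In the thin case your assignment of tools is backwards: Shapiro's quasi-balayage gives nothing when the density of $D$ tends to zero, since its hypothesis is precisely (\ref{eq:14}); and your plan to ``force $P_2\equiv 0$'' presupposes that $v^c$ has an asymptotic expansion governed by an asymptotic cone of $D$, which is neither established nor true in general (the cone formula (\ref{eq:30}) applies to exact cones, and $D$ need not be asymptotically conical). The paper's thin-case argument is different and elementary: zero Lebesgue density (\ref{eq:19}) implies $\|\chi_{\Omega^c}(\rho\,\cdot)\|_{\mathcal L}\to0$, hence $V_2(\Omega^c)(\rho x)/\rho^2\to0$ uniformly on compacts (Proposition \ref{prop:2.4}), so by (\ref{eq20}) the blow-down of $u$ converges to the quadratic polynomial $P$ and the quadratic bound follows directly, with no cone analysis. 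As it stands, your proposal is missing the two ideas that carry the paper's proof: the monotonicity-formula transfer across scales in the thick case, and the $\mathcal L$-norm smallness of the rescaled complement in the thin case.
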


Properties (ii)-(iv) were proved in \cite[Theorem II]{CKS} under the
restriction that the Schwarz potential $ u$ has a quadratic growth.
According to Corollary \ref{TM15},  the Schwarz potential of a null
quadrature domain has $|x|^2\log |x|$ growth at infinity.  
Thus the principal technical target of this section is the
establishment 
the quadratic growth.

Note that Theorem \ref{thm:1} (ii) solves a
conjecture which was raised by Shapiro \cite{Shp2} and Shahgholian
\cite{Shg}, namely, that any solution $u$ to (\ref{eq:15})
satisfying  the growth condition $u(x)=O(|x|^2\log |x|)$ is
non-positive. 

When the Schwarz potential $ u$ is  a limiting
(global) case of the classical obstacle problem, then   $u\leq0$ in
$\R^n$ in addition to (\ref{eq:15}). In this case
the quadratic growth is a consequence of the boundedness of the second
order derivatives and   properties  (iii) and (iv) follow from the
known regularity
results for the obstacle problem \cite{Ca1,Ca2},
\cite[Ch.2]{Fr}.

The proof of Theorem \ref{thm:1} will be split into two cases:
``thick" and ``thin" complement $(\R^n\setminus\Omega)$ at infinity,
where the ``size" is expressed here by Lebesgue  density at
infinity.

 In the case of thick complement we will prove the expected
result, namely, $\Omega$ is a half-space. This means that the boundary
$\part\Omega$ is regular at infinity. We will employ common 
techniques of free boundaries, such as blow-up and a monotonicity
formula  combined with Shapiro's quasi-balayage \cite{Shp5}. In the
singular case of thin complement we will adopt  special techniques
from \cite{KM}.

\subsection{Null Quadrature Domains with Thick Complement}
\label{sec:Thick Complement} $~$

We denote by $|D|$ the Lebesgue measure of a set $D$ in $\R^n$ and
$B_\rho$  the ball with radius $\rho$ centered at the origin.
\begin{theorem}
 \label{thm:3}
A null quadrature domain $\Omega$ satisfying condition
\begin{equation}
\label{eq:13}
    \limsup_{\rho\to \infty}\frac{|\Omega^c\cap B_\rho|}{|B_\rho|}>0
\end{equation} 
is a half-space.
\end{theorem}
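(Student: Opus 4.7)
The strategy is a blow-down analysis at infinity combined with Shapiro's quasi-balayage to upgrade the a priori $O(|x|^2\log|x|)$ estimate of Corollary~\ref{TM15} into a uniform quadratic bound along the rescaling. Let $D := \R^n\setminus\Omega$ and choose $\rho_k\to\infty$ realizing the limsup in \eqref{eq:13}. Set
\[
u_k(x) := \rho_k^{-2}\, u(\rho_k x), \qquad D_k := \rho_k^{-1}D, \qquad \Omega_k := \rho_k^{-1}\Omega,
\]
so that $\Delta u_k = -\chi_{\Omega_k}$, $u_k\equiv 0$ on $D_k$, and $|D_k\cap B_1|\ge c|B_1|$ uniformly in $k$. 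The principal obstacle is obtaining a uniform $L^\infty_{\rm loc}$ bound on $\{u_k\}$: Corollary~\ref{TM15} alone yields only the logarithmically divergent estimate $|u_k(x)| \le C(1+|x|)^2\log(\rho_k(1+|x|))$.

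To secure uniform bounds, apply Shapiro's quasi-balayage from \cite{Shp5} to truncate the configuration outside $B_{\rho_k}$, producing an auxiliary free boundary problem with bounded non-coincidence set whose solution agrees with $u$ throughout a large interior region. The local quadratic estimate \cite[Theorem~I]{CKS} provides a universal-constant bound $|u(x)|\le C|x-x_0|^2$ near each free-boundary point $x_0$ of the truncated problem. The positive-density hypothesis on $D$ then guarantees that free-boundary points populate every scale densely enough for these local estimates to be glued into a uniform estimate $|u_k(x)|\le C(1+|x|^2)$ on each $B_R$. Standard $W^{2,p}$ compactness extracts a subsequential limit $u_\infty$ on $\R^n$ satisfying $\Delta u_\infty = -\chi_{\Omega_\infty}$, $u_\infty\equiv 0$ on $D_\infty$, quadratic growth, and $|D_\infty\cap B_1|\ge c|B_1|$.

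Theorem~II of \cite{CKS} now applies to $u_\infty$: the set $D_\infty$ is convex with real analytic boundary and $u_\infty\le 0$. A further diagonal blow-down of $D_\infty$ yields a convex cone of positive density, and among such cones only half-spaces arise as coincidence sets for solutions to the global obstacle problem; hence $D_\infty$ is a half-space. Transferring back to $\Omega$: the uniform quadratic bound obtained along $\{\rho_k\}$ extends to all scales, so \cite[Theorem~II]{CKS} applies directly to $u$, giving that $D$ is convex and analytic. An Alt--Caffarelli--Friedman type monotonicity formula applied to the positive and negative parts of the directional derivatives $\partial_e u$ (each vanishing on $D$ and harmonic on its support in $\Omega$), combined with the fact that the monotonicity quantity attains its half-space value at infinity along $\{\rho_k\}$, forces $u$ to be the one-dimensional half-space profile. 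The main obstacle in the argument is the quasi-balayage construction in step two, which is what bootstraps the local quadratic growth of \cite[Theorem~I]{CKS} into a scale-invariant estimate.
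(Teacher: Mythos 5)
The decisive difficulty in this theorem is not the blow-down itself but the passage from a quadratic bound \emph{along a subsequence of scales} to a quadratic bound \emph{at all scales}, and your proposal does not actually close that gap. Under the hypothesis (\ref{eq:13}) only a $\limsup$ is assumed, so Shapiro's quasi-balayage (Theorem \ref{thm:2}) yields $\sup_{B_{\rho_k}}|u|\leq C\rho_k^2$ only along the radii $\rho_k$ where the density stays bounded below; at intermediate radii you have no density information at all. Your proposed remedy --- truncate by quasi-balayage and ``glue'' the local estimate of \cite[Theorem I]{CKS} because free-boundary points ``populate every scale densely enough'' --- is circular: Theorem I of \cite{CKS} converts an $L^\infty$ bound of the correct quadratic order at a given scale into a $C^{1,1}$ bound at half that scale, so to invoke it at an intermediate radius $\rho$ you already need $\sup_{B_\rho}|u|\lesssim\rho^2$, which is precisely what is unproven there; nothing in the $\limsup$ hypothesis prevents $\sup_{B_{2^j}}|u|$ from being much larger than $2^{2j}$ between the good radii. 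The same unsupported assertion reappears later when you write that ``the uniform quadratic bound obtained along $\{\rho_k\}$ extends to all scales.'' The paper's proof of Lemma \ref{lem:quadratic growth1} is devoted exactly to this bootstrap (Lemma \ref{lem:2}): one sets $S_j=\sup_{B_{2^j}}|u|$, splits according to whether the doubling set $\mathbb{M}=\{j:4S_{j-1}>S_j\}$ is finite or infinite, and in the infinite case derives a contradiction by rescaling with the factor $S_{j_m-1}$ and using the Alt--Caffarelli--Friedman monotonicity formula, where \cite[Theorem I]{CKS} is applied only at the good scales $j_k\in\mathbb{K}$ (where the sup bound is already available) and monotonicity transports the resulting bound down to the bad scales. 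This dichotomy-plus-monotonicity mechanism is the missing idea in your argument; the ACF formula appears in your proposal only at the very end and for a different purpose.

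For completeness: once quadratic growth is in hand, your endgame (apply \cite[Theorem II]{CKS}, blow down, classify the limiting cone, transfer back) is broadly in the spirit of what can be done, but the paper proceeds differently and more concretely: quadratic growth gives $u\leq 0$, hence $\Omega^c$ convex by \cite[Corollary 7]{Ca2}; the blow-down limit $u_\infty$ has coincidence cone $K^c$ by \cite[Theorem 2.5]{KS}; the explicit cone-potential formula (\ref{eq:30}) together with $u_\infty\leq 0$ kills the logarithmic term, and \cite[Corollary 3.9]{KM} forces $K$ to be a half-space; finally a non-degeneracy estimate (Caffarelli's Lemma) gives $K^c\subset\Omega^c$, whence $\Omega$ is a half-space. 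Your concluding appeal to the ACF quantity ``attaining its half-space value at infinity'' is left unquantified and would itself require justification, but the fatal issue remains the circular gluing step above.
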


\begin{lemma}
 \label{lem:quadratic growth1}
Let $u$ be the Schwarz potential of null quadrature domain $\Omega$
which satisfies condition (\ref{eq:13}),
then there is a positive constant $C$ such that
\begin{equation}
 \label{eq:2}
 |u(x)|\leq C\left( 1+|x| \right)^2.
\end{equation}  
\end{lemma}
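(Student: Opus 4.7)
The plan is to argue by contradiction via a blow-down at infinity, showing that a limit of rescalings of $u$ would be a nontrivial entire harmonic function vanishing on a set of positive Lebesgue measure --- impossible by real analyticity. Since $\Omega$ is a null quadrature domain, Proposition~\ref{prop:1} identifies its Schwarz potential $u$ with an element of $V[\chi_\Omega]$, so Proposition~\ref{thm:4} together with (\ref{eq26}) yields the a priori log-quadratic estimate $|u(x)|\le C(1+|x|)^2\log(2+|x|)$. Writing $\phi(\rho):=\rho^{-2}\sup_{B_\rho}|u|$ and noting that $M(\rho):=\sup_{B_\rho}|u|$ is nondecreasing, this reads $\phi(\rho)\le C\log\rho$ for large $\rho$, and the conclusion (\ref{eq:2}) amounts to showing $\phi$ is bounded on $[1,\infty)$. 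I therefore assume $\phi$ is unbounded and seek a contradiction.

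The crux is to select $\rho_k\to\infty$ along which three properties hold simultaneously: (i) $\phi(\rho_k)\to\infty$; (ii) the doubling ratio $\phi(2\rho_k)/\phi(\rho_k)$ is bounded; and (iii) the complement density $d(\rho_k):=|\Omega^c\cap B_{\rho_k}|/|B_{\rho_k}|$ is bounded below by some $c_0>0$. Property (ii) is forced on a subsequence by pigeonhole, since iterated failure $\phi(2^j\rho)>A^j\phi(\rho)$ would exceed the logarithmic ceiling $\phi(2^j\rho)\le C\log(2^j\rho)$. Property (iii) uses monotonicity of $\rho\mapsto|\Omega^c\cap B_\rho|$: if $\tilde\rho_i\to\infty$ are the thick scales from (\ref{eq:13}), then $d(\rho)\ge c(\tilde\rho_i/\rho)^n$ for $\rho\ge\tilde\rho_i$, so thickness persists on each multiplicative window $[\tilde\rho_i,L\tilde\rho_i]$ with constant at least $c/L^n$. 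Choosing $L$ large enough to fit the doubling pigeonhole, one extracts $\rho_k$ inside such windows also satisfying (i) and (ii).

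Given such $\rho_k$, set $v_k(x):=u(\rho_k x)/\bigl(\rho_k^2\phi(\rho_k)\bigr)$. Then $\sup_{B_1}|v_k|=1$, $\sup_{B_2}|v_k|\le 4\phi(2\rho_k)/\phi(\rho_k)\le C$, $\|\Delta v_k\|_\infty\le \phi(\rho_k)^{-1}\to 0$, and $v_k\equiv 0$ on $E_k:=(\R^n\setminus\Omega)/\rho_k\cap B_1$ with $|E_k|=d(\rho_k)|B_1|\ge c_0|B_1|$. Interior Calder\'on-Zygmund estimates give uniform $W^{2,p}$ and hence $C^{1,\alpha}$ bounds on $v_k|_{B_1}$; extraction produces $v_k\to v_\infty$ uniformly on $\overline{B_1}$ with $v_\infty$ harmonic in $B_1$ and $\sup_{B_1}|v_\infty|=1$, so in particular $v_\infty\not\equiv 0$. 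For each $\varepsilon>0$, uniform convergence forces $E_k\subset\{|v_\infty|<\varepsilon\}\cap B_1$ for large $k$, giving $|\{|v_\infty|<\varepsilon\}\cap B_1|\ge c_0|B_1|$; letting $\varepsilon\to 0$ and invoking continuity of Lebesgue measure from above yields $|\{v_\infty=0\}\cap B_1|\ge c_0|B_1|>0$, which is impossible for a nonzero real-analytic function on the connected set $B_1$. This contradicts the assumed unboundedness of $\phi$ and delivers (\ref{eq:2}).

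The hardest part is the joint scale selection: (\ref{eq:13}) only supplies a $\limsup$-sparse family of thick scales, while the pigeonhole produces an a priori unrelated family of doubling-tame scales, and one needs $\phi(\rho_k)$ to diverge as well. The density-propagation observation is what reconciles the three requirements by fattening the thick-scale set to fixed multiplicative windows; without it the limit $v_\infty$ could either be trivial or could fail to vanish on a set of positive measure, collapsing the contradiction.
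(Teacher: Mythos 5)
Your blow-up machinery is fine once scales $\rho_k$ with properties (i)--(iii) are in hand: a harmonic limit of supremum one on $B_1$ that vanishes on a subset of measure at least $c_0|B_1|$ is indeed impossible, and the estimates $\sup_{B_2}|v_k|\le 4\phi(2\rho_k)/\phi(\rho_k)$, $\|\Delta v_k\|_\infty\le \phi(\rho_k)^{-1}$ are correct. The genuine gap is the joint scale selection itself, which you identify as the hardest part but do not actually carry out, and which cannot be carried out with the tools you invoke. Condition (\ref{eq:13}) is only a $\limsup$, so the thick radii $\tilde\rho_i$ may be extremely sparse, and your density-propagation remark fattens each of them only to a window $[\tilde\rho_i,L\tilde\rho_i]$ of \emph{fixed} multiplicative length, with constant degraded to $c/L^n$. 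Unboundedness of $\phi(\rho)=\rho^{-2}\sup_{B_\rho}|u|$ gives no control on \emph{where} $\phi$ is large: $\phi$ is not monotone, and monotonicity of $\sup_{B_\rho}|u|$ only yields $\phi(\tilde\rho)\ge\phi(\sigma)(\sigma/\tilde\rho)^2$ for $\tilde\rho\ge\sigma$, which is useless when the nearest thick radius is much larger than $\sigma$ (and gives nothing at all for thick radii below $\sigma$). Likewise your pigeonhole under the $\log$ ceiling produces doubling-tame scales somewhere, but nothing forces them to lie in a thick window \emph{and} to carry $\phi(\rho_k)\to\infty$. The scenario your argument cannot exclude is precisely that $\phi$ stays bounded on every thick window while blowing up in the multiplicative gaps between them; there your rescaled complement has negligible measure in $B_1$, the real-analyticity contradiction has no leverage, and yet the lemma still has to be proved.

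This missing step is exactly what the paper treats as the main difficulty. The paper obtains the quadratic bound only along the thick radii via Shapiro's quasi-balayage (Theorem \ref{thm:2}) --- your blow-up at thick, doubling-tame scales essentially re-derives a weaker version of that estimate --- and then devotes Lemma \ref{lem:2} to propagating the bound from this sparse subsequence of dyadic scales to all scales. That propagation is not a soft selection argument: it runs a dichotomy on the doubling set $\mathbb{M}=\{j: 4S_{j-1}>S_j\}$, and in the hard case controls the Alt--Caffarelli--Friedman functional (Theorem \ref{thm:ACF}) at intermediate radii by rescaling to the good scales in $\mathbb{K}$, where the local $C^{1,1}$ estimate of \cite[Theorem I]{CKS} applies; the contradiction then comes from the Poincar\'e inequality and the sign rigidity of $\part_i u_\infty$, not from a positive-measure zero set. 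To repair your proof you would need an argument of this type (or an equivalent substitute, e.g.\ the monotonicity formula) to handle super-quadratic growth occurring at the thin intermediate scales; as written, your argument only rules out super-quadratic growth along thick scales, which is the easy half.
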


 Set  
\begin{equation}
 \label{eq:3}
S_j=S_j(u)=\sup_{B_{2^j}} |u|, \quad j=1,2,3,... 
\end{equation}
Then (\ref{eq:2}) is equivalent to 
\begin{equation}
 \label{eq:4} 
S_j\leq C\left(2^{j}\right)^2\quad \text{ for all} \ j\in \mathbb{N}.
\end{equation} 

 We  use first Shapiro technique's of quasi-balayage \cite{Shp5} which
gives (\ref{eq:4}) along a subsequence of $\{2^j\}$, and then the  main
difficulty
 is to extend it to the  entire  sequence.

\begin{theorem}[Shapiro {\cite[\S 6]{Shp5}}]
\label{thm:2}
 Let $\sigma\in L^\infty(\R^n)$ and $w$ be a  tempered distribution
on $\R^n$ satisfying the system 
\begin{equation}
 \left\{\begin{array}{ll}\Delta w =-\sigma\chi_{\Omega} \quad &
\text{in}\ \R^n\\ w(x)=0 \quad & x\in \R^n\setminus \overline{\Omega}.
\end{array}\right.
\end{equation}  
If 
\begin{equation}
\label{eq:14}
 \frac{|\Omega^c\cap B_\rho|}{|B_\rho|}\geq c_0>0,
\end{equation} 
then for  $|x|=\rho$ there holds
\begin{equation}
\label{eq:18}
 |w(x)|\leq C_1\rho^2,
\end{equation} 
and the constant $C_1$ depends on $c_0$, $\|\sigma\|_{L^\infty}$ and
the dimension.
\end{theorem}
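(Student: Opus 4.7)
The plan is to reduce the claim to a scale-invariant uniform estimate at the unit scale, and to derive that estimate from the Dirichlet condition $w\equiv 0$ on $\Omega^c$ combined with the thickness of $\Omega^c$ in $B_1$. Setting $w_\rho(y):=w(\rho y)/\rho^2$, $\sigma_\rho(y):=\sigma(\rho y)$, $\Omega_\rho:=\rho^{-1}\Omega$, the rescaled $w_\rho$ still satisfies $\Delta w_\rho=-\sigma_\rho\chi_{\Omega_\rho}$ on $\R^n$, vanishes on $\Omega_\rho^c$, remains a tempered distribution, and $\|\sigma_\rho\|_\infty\le M$. The hypothesis becomes $|\Omega_\rho^c\cap B_1|\ge c_0|B_1|$, so (\ref{eq:18}) is equivalent to $\sup_{|y|=1}|w_\rho(y)|\le C_1=C_1(c_0,M,n)$. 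Note that the auxiliary functions $\phi_\pm(y):=\pm w_\rho(y)+M|y|^2/(2n)$ are subharmonic on $\R^n$, because $\Delta\phi_\pm=\mp\sigma_\rho\chi_{\Omega_\rho}+M\ge 0$, and both reduce to the explicit quadratic $M|y|^2/(2n)$ on $\Omega_\rho^c$; this places $\pm w_\rho$ under a quadratic ceiling on a thick subset of $B_1$.

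At the unit scale I would combine elliptic regularity with a Poincar\'e--Caccioppoli argument. Since $\sigma_\rho\chi_{\Omega_\rho}\in L^\infty(\R^n)$ with norm $\le M$, interior $W^{2,p}$ theory promotes $w_\rho$ from a tempered distribution to a $C^{1,\alpha}\cap W^{2,p}_{\mathrm{loc}}$ function and yields
\[
\|w_\rho\|_{L^\infty(B_1)}\le C(n,p)\bigl(M+\|w_\rho\|_{L^p(B_2)}\bigr).
\]
The $L^p(B_2)$ norm is then bounded by coupling a Caccioppoli estimate, obtained by testing the equation against $\eta^2 w_\rho$ for a smooth cutoff $\eta$ equal to $1$ on $B_2$ and supported in $B_3$ (which controls $\|\nabla w_\rho\|_{L^2(B_2)}$ in terms of $M$ and $\|w_\rho\|_{L^2(B_3)}$), with a Poincar\'e inequality for functions vanishing on a subset of positive Lebesgue measure; because $\Omega_\rho^c\cap B_1\subset B_3$ has density $\ge c_0|B_1|/|B_3|$, the Poincar\'e constant depends only on $c_0$ and $n$.

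The main obstacle is to close the Caccioppoli--Poincar\'e loop without losing the density hypothesis, since a naive iteration transports the $L^2$-norm of $w_\rho$ to larger balls where the density of $\Omega_\rho^c$ is no longer controlled by $c_0$ and the Poincar\'e constant degrades. My preferred remedy is a compactness / contradiction argument: if the constant $C_1(c_0,M,n)$ in the unit-scale bound did not exist, one could extract a sequence of suitably normalized solutions, pass to a limit, and obtain a bounded harmonic function on $\R^n$ vanishing on a set of positive Lebesgue density at the origin yet non-trivial on $\partial B_1$, contradicting the Liouville-type rigidity for harmonic functions with a thick zero set. Once the unit-scale bound is established, rescaling restores the $\rho^2$ factor in (\ref{eq:18}), with $C_1$ depending only on $c_0$, $M$ and $n$.
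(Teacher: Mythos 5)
There is a genuine gap, and it sits exactly where you park the difficulty. First, a point of comparison: the paper never proves this statement itself; it is quoted from Shapiro's quasi-balayage work \cite[\S 6]{Shp5}, where the bound is obtained by sweeping the point mass at $x$ onto the thick zero set $\Omega^c\cap B_\rho$ and exploiting that $w$ is a \emph{tempered} solution of a global Poisson equation, so that it admits a potential-theoretic representation against which the swept measure can be paired. Your route is different, and its closing step does not work as described. In the compactness argument, the only normalization available is the sup of $w_\rho$ on $\partial B_1$ (or on $B_1$, or $B_2$); this gives no uniform bound on any strictly larger ball, so interior elliptic estimates yield no equicontinuity near $\partial B_1$, no subsequential limit there, and certainly not a globally defined bounded harmonic limit on $\R^n$ to which a Liouville-type rigidity could be applied. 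If you normalize on $B_2$ instead, you only get convergence on compact subsets of $B_2$, and the limit may vanish identically with no contradiction: for harmonic functions there is no uniform doubling, e.g.\ $h=\mathrm{Re}\,(z^N)$ has $\sup_{B_1}|h|/\sup_{B_2}|h|=2^{-N}$, and suitably rescaled it is also small on a subset of $B_1$ of measure $\geq c_0|B_1|$ while huge on $\partial B_1$. So neither the Caccioppoli--Poincar\'e loop (which, as you admit, does not close) nor the proposed compactness remedy produces the unit-scale bound.

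More fundamentally, any proof that uses only the equation, the vanishing of $w$ on a thick subset of $B_\rho$, and $\|\sigma\|_\infty$ --- and invokes temperedness only nominally, as your sketch does --- cannot be correct, because the conclusion fails without the temperedness hypothesis. Proposition \ref{TM21} (Shapiro's example) provides $\Omega=\{x_n<f(x_1,\dots,x_{n-1})\}$ with $f$ bounded, whose complement contains a half-space and hence satisfies (\ref{eq:14}) with a fixed $c_0$ at every large scale, yet whose Schwarz potential is \emph{not} tempered; if (\ref{eq:18}) followed from those local data alone, that potential would have quadratic growth and be tempered, a contradiction. Hence temperedness must enter the estimate quantitatively --- this is precisely the role of quasi-balayage in \cite{Shp5} --- and your argument never identifies where it does. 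For the same reason the purely local unit-scale statement you reduce to is itself doubtful: the harmonic-function analogue (small on a thick subset of $B_1$ implies bounded on $\partial B_1$) is false, so the thickness of the zero set at the single scale $\rho$ cannot, by itself, carry the proof; the global hypotheses are what rescue the theorem.
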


\begin{proposition}
Retaining  the hypotheses of Lemma \ref{lem:quadratic growth1},
then there is an infinite subset $\mathbb{K}$ of the natural numbers 
such that  (\ref{eq:4}) holds for $ j_k\in\mathbb{K}$.
\end{proposition}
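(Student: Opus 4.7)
The plan is to apply Shapiro's quasi-balayage bound (Theorem \ref{thm:2}) along a subsequence of radii supplied by the thickness hypothesis (\ref{eq:13}) and then convert the resulting estimate on spheres into an estimate on balls via a super/subharmonic comparison.

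First I would use (\ref{eq:13}) to extract a sequence $\rho_k\to\infty$ and a constant $c_0>0$ such that
\[
    \frac{|\Omega^c\cap B_{\rho_k}|}{|B_{\rho_k}|}\ge c_0\qquad\text{for all }k.
\]
Next, since $\Omega$ is a null quadrature domain, Corollary \ref{TM15} guarantees that its Schwarz potential $u$ satisfies $u(x)=O(|x|^2\log|x|)$, so in particular $u$ is a tempered distribution. Together with $\Delta u=-\chi_\Omega$ and $u=0$ on $\Omega^c$, this puts $u$ exactly in the setting of Theorem \ref{thm:2} with $\sigma\equiv 1$, applied along the sequence $\rho_k$. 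The theorem yields the sphere bound
\[
    |u(x)|\le C_1\rho_k^2\qquad\text{for }|x|=\rho_k,
\]
with $C_1$ independent of $k$.

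To promote this boundary estimate to an interior estimate, I would observe that $u$ is superharmonic ($\Delta u=-\chi_\Omega\le 0$), so the minimum principle gives
\[
    \inf_{B_{\rho_k}}u\ge\inf_{\partial B_{\rho_k}}u\ge -C_1\rho_k^2.
\]
For the upper bound, set $\tilde u(x):=u(x)+\frac{|x|^2}{2n}$, so that $\Delta\tilde u=\chi_{\Omega^c}\ge 0$; thus $\tilde u$ is subharmonic and the maximum principle provides
\[
    \sup_{B_{\rho_k}}u\le\sup_{\partial B_{\rho_k}}\tilde u\le C_1\rho_k^2+\frac{\rho_k^2}{2n}.
\]
Combining the two inequalities gives $\sup_{B_{\rho_k}}|u|\le C_2\rho_k^2$ with a constant depending only on $c_0$ and the dimension.

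Finally, for each $k$ choose $j_k\in\mathbb{N}$ with $2^{j_k}\le\rho_k<2^{j_k+1}$ and set $\mathbb{K}:=\{j_k\}$. Since $\rho_k\to\infty$, $\mathbb{K}$ is infinite, and the inclusion $B_{2^{j_k}}\subset B_{\rho_k}$ together with $\rho_k^2\le 4\cdot(2^{j_k})^2$ yields
\[
    S_{j_k}=\sup_{B_{2^{j_k}}}|u|\le C_2\rho_k^2\le 4C_2\bigl(2^{j_k}\bigr)^2,
\]
which is exactly (\ref{eq:4}) along $\mathbb{K}$. The only delicate point is ensuring the hypotheses of Shapiro's theorem are met — i.e., that $u$ be tempered — but this is precisely what Corollary \ref{TM15} provides; passing from the sphere bound to the ball bound is then essentially free thanks to the two-sided control furnished by $\Delta u$ being bounded.
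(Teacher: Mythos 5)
Your proof is correct and takes essentially the same route as the paper: extract the sequence $\rho_k$ from (\ref{eq:13}), apply Shapiro's quasi-balayage bound (Theorem \ref{thm:2}) to the Schwarz potential (whose temperedness is indeed supplied by Corollary \ref{TM15}), and pass to dyadic radii $2^{j_k}\leq\rho_k\leq 2^{j_k+1}$ to get $S_{j_k}\leq 4C\left(2^{j_k}\right)^2$. The only difference is the sphere-to-ball step, which the paper dispatches by invoking subharmonicity of $|u|$, while you argue via the minimum principle for the superharmonic $u$ together with the subharmonic comparison function $u(x)+\frac{|x|^2}{2n}$ -- a slightly more careful justification of the same estimate.
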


\begin{proof}
Assuming (\ref{eq:13}), then
there is a sequence $\{\rho_k\}$ tending to infinity and a positive
constant $c_0$  such that (\ref{eq:14}) holds. 
We may apply now  Theorem \ref{thm:2} to $u$, the Schwarz
potential of $\Omega$, then inequality (\ref{eq:18}) together with
the fact that $|u|$ is subharmonic yields that
\begin{equation*}
 \sup_{B_{\rho_k}}|u|=\sup_{\{|x|=\rho_k\}}|u|\leq C_1\rho_k^2.
\end{equation*}
Since for each $k$ there is an integer
$j_k$ such that $2^{j_k}\leq \rho_k\leq 2^{j_k+1}$, we get 
\begin{equation}
 S_{j_k}\leq 4C_1\left( 2^{j_k} \right)^2
\end{equation} 
for infinitely many $j_k$.
\end{proof}

Thus Lemma \ref{lem:quadratic growth1} follows from:
\begin{lemma}
 \label{lem:2} Let $u$ be the Schwarz potential of null quadrature
domain $\Omega$ and  defined $S_j$  by (\ref{eq:3}).  If there
is an infinite subset $\mathbb{K}$ of the
natural numbers such that
\begin{equation}
\label{eq:8}
 S_j\leq C_2\left({2^j}\right)^2 \quad \text{for} \ j\in \mathbb{K},
\end{equation} 
then there is a constant $C_0$ so that 
\begin{equation}
\label{eq:5}
 S_j\leq 4C_0\left(2^{j}\right)^2\quad \text{ for all positive}\ j.
\end{equation} 
\end{lemma}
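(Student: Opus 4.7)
The naive attempt uses that $-u$ is subharmonic (since $\Delta u = -\chi_\Omega \leq 0$), so by the maximum principle $\sup_{B_r}(-u)$ is nondecreasing in $r$. Combined with the trivial monotonicity of $S_j$ in $j$, this gives $S_j \leq S_{j_k}$ for the smallest $j_k \in \mathbb{K}$ with $j_k \geq j$. Unfortunately this bound is only $C_2 \cdot 4^{j_k - j} \cdot (2^j)^2$, which is quadratic in $2^j$ only when the gap $j_k - j$ is uniformly bounded. Since $\mathbb{K}$ can have arbitrarily long gaps, the proof must exploit finer structure of the Schwarz potential to fill them.

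The plan is a blow-up argument by contradiction. Assume the conclusion fails; then there is a sequence $j'_m \to \infty$ with $T_m := S_{j'_m}/4^{j'_m} \to \infty$. Set
\[
\tilde u_m(x) := u(2^{j'_m} x)/S_{j'_m}.
\]
Then $\|\tilde u_m\|_{L^\infty(B_1)} = 1$, $\tilde u_m = 0$ on $\Omega_m^c := 2^{-j'_m}\Omega^c$, and $\Delta \tilde u_m = -\chi_{\Omega_m}/T_m \to 0$ in $L^\infty$. By interior $C^{1,\alpha}$ Schauder estimates, extract a subsequential limit $\tilde u_\infty$ that is harmonic on $B_1$ with $\|\tilde u_\infty\|_{L^\infty(B_1)}$ comparable to $1$. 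The task is to force this limit to vanish, giving the desired contradiction.

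To force $\tilde u_\infty \equiv 0$, I would show that the blow-down sets $\Omega_m^c \cap B_1$ subconverge (in measure or in the Hausdorff sense) to a set $E_\infty$ of positive Lebesgue measure; then $\tilde u_\infty = 0$ on $E_\infty$, and since a harmonic function vanishing on a set of positive measure is identically zero, $\tilde u_\infty \equiv 0$ on $B_1$, contradicting the normalization. In the setting that motivates the lemma via Lemma \ref{lem:quadratic growth1}, the thick-complement hypothesis $\limsup_{\rho \to \infty} |\Omega^c \cap B_\rho|/|B_\rho| > 0$ directly yields $|E_\infty| > 0$ along the relevant subsequence, and the blow-up argument closes.

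The main obstacle is delivering $|E_\infty| > 0$ under the bare hypothesis of the lemma, since a priori $\Omega^c$ could have vanishing density. To handle this I would invoke the fine structure $u = V_2(\chi_\Omega) + P$ for some $P \in \mathcal{H}_2$: the asymptotic expansion of $V_2(\chi_\Omega)$ near any tangent cone at infinity has the form (\ref{eq:30}) with a leading $P_2(x)\log|x|$ term, and the subsequence hypothesis $S_{j_k} \leq C_2\,4^{j_k}$ forces this logarithmic coefficient to vanish in every asymptotic direction. Then Cauchy estimates applied to the entire harmonic continuation $w = v - u$ supplied by Proposition \ref{TM13} bound the polynomial degree of $w$ by $2$, and the quadratic growth of $u$ propagates from $\mathbb{K}$ to all $j$, yielding the uniform bound $S_j \leq 4 C_0 \cdot (2^j)^2$.
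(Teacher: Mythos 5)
Your overall shape (blow-up and contradiction) matches the paper, but two essential ideas are missing and your substitutes for them do not work. First, nondegeneracy of the blow-up: with the normalization $\tilde u_m=u(2^{j'_m}\cdot)/S_{j'_m}$ you only know $\sup_{\overline{B_1}}|\tilde u_m|=1$, while the interior $C^{1,\alpha}$ estimates give convergence only on compact subsets of $B_1$; nothing prevents the limit from vanishing on every such compact set (this happens precisely when $S_{j'_m-1}/S_{j'_m}\to0$), so the claim that $\|\tilde u_\infty\|_{L^\infty(B_1)}$ is comparable to $1$ is unjustified and the contradiction you aim at evaporates. The paper avoids this by first reducing, via an elementary combinatorial argument, to indices in the doubling set $\mathbb{M}=\{j:\,4S_{j-1}>S_j\}$ (the case $\mathbb{M}$ finite is handled separately by iterating $S_j\le\tfrac14 S_{j+1}$ up to the next element of $\mathbb{K}$), and by normalizing with $S_{j_m-1}$ rather than $S_{j_m}$: then $\sup_{B_{1/2}}|u_m|=1$ while $\sup_{B_1}|u_m|\le4$, so the limit is genuinely nontrivial.

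Second, the mechanism that kills the limit. Your route through a positive-measure blow-down of $\Omega^c$ is unavailable: the lemma assumes no density condition, and even under (\ref{eq:13}) the radii of positive density need not coincide with your contradiction scales $2^{j'_m}$, so $|E_\infty|>0$ ``along the relevant subsequence'' does not follow. Your fallback is also not a proof: formula (\ref{eq:30}) describes potentials of cones, and no blow-down to a cone is available at this stage; moreover the entire harmonic function $w$ of Proposition \ref{TM13} is trivially a quadratic polynomial here (since $u\in V[\Omega]$ one may write $u=V_2(\Omega)+P$ with $P\in\mathcal{H}_2$), so bounding its degree says nothing about removing the logarithm from the a priori bound (\ref{eq26}) at the scales missing from $\mathbb{K}$ --- which is the whole difficulty. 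What the paper actually uses to bridge the gaps of $\mathbb{K}$ is the Alt--Caffarelli--Friedman monotonicity formula (Theorem \ref{thm:ACF}) applied to $\partial_i u^{\pm}$: hypothesis (\ref{eq:8}) together with \cite[Theorem I]{CKS} gives a uniform $C^{1,1}$ bound for the rescalings at scales $2^{j_k}$, $j_k\in\mathbb{K}$, hence a uniform bound on $\Phi$ at those radii, and monotonicity transfers this bound down to every intermediate scale $2^{j_m}$; combined with $S_{j_m}\ge m\,4^{j_m}$ this forces $\Phi(\tfrac12,\partial_i u_m^{+},\partial_i u_m^{-})\to0$, so in the limit each $\partial_i u_\infty$ has one sign, is harmonic and vanishes at $0\in\Omega^c$, hence vanishes identically; then $u_\infty\equiv0$, contradicting the normalization. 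Without the reduction to $\mathbb{M}$ and this monotonicity transfer (or a genuine replacement for them), your argument has a gap at both decisive steps.
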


In the proof of this lemma we  will use  Alt, Caffarelli and
Friedman's monotonicity formula
\cite{ACF}.
\begin{theorem}[\cite{ACF}]
\label{thm:ACF} 
Suppose $v_1$ and
$v_2$ are non-negative subharmonic functions in $B_R$ such that
$v_1(x)v_2(x)=0 $ in $B_R$ and $v_1(0)=v_2(0)=0$. Then for $0<r<R$,
\begin{equation}
\Phi(r):=\Phi(r,v_1,v_2)=\frac 1 {r^4}\left(
\int\limits_{B_r}\frac{|\nabla v_1|^2}{|x|^{n-2}}dx \right)\left(
\int \limits_{B_r}\frac{|\nabla v_2|^2}{|x|^{n-2}}dx \right).
\end{equation} 
 is a nondecreasing function of $r$.
\end{theorem}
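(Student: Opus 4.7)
The plan is to use the Alt--Caffarelli--Friedman monotonicity formula (Theorem~\ref{thm:ACF}) to propagate the hypothesized quadratic bound from the sparse sequence of scales $\{2^{j_k}\}_{j_k\in\mathbb{K}}$ to all scales, anchoring the propagation at a point of the free boundary.

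I may assume $\part\Omega\neq\emptyset$ (otherwise $\Omega=\R^n$, $u\equiv 0$, and the conclusion is trivial). Fix $x_0\in\part\Omega$ and a unit vector $e\in\R^n$. Since $\Delta u=-1$ in $\Omega$ and $\nabla u\equiv 0$ on $\Omega^c$, the directional derivative $w=\part_e u$ is harmonic in $\Omega$ and vanishes identically on $\Omega^c$, so $v_1:=w^+$ and $v_2:=w^-$ are nonnegative continuous subharmonic functions on $\R^n$ with $v_1 v_2\equiv 0$ and $v_1(x_0)=v_2(x_0)=0$. Theorem~\ref{thm:ACF} applied at $x_0$ therefore gives that $\Phi^e(r):=\Phi(r,v_1,v_2)$ is nondecreasing in $r>0$.

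At the scales $r_k:=2^{j_k-1}$ (with $j_k\in\mathbb{K}$ large enough that $B_{r_k}(x_0)\subset B_{2^{j_k}}(0)$), the hypothesis $S_{j_k}\le C_2\,4^{j_k}$ combined with standard interior gradient estimates for $\Delta u=-\chi_\Omega$ (bounded right-hand side) yields $|\nabla u|\le C'\,2^{j_k}$ on $B_{r_k}(x_0)$, and hence $v_i\le C'\,r_k$ there. A classical integration-by-parts bound for nonnegative subharmonic functions vanishing at $x_0$ then gives
\[
\int_{B_{r_k}(x_0)}\frac{|\nabla v_i|^2}{|x-x_0|^{n-2}}\,dx\le C''\,r_k^2\qquad(i=1,2),
\]
so $\Phi^e(r_k)\le M$ with $M$ depending only on $C_2$ and $n$. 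Monotonicity together with $r_k\to\infty$ force $\Phi^e(r)\le M$ for \emph{every} $r>0$.

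Finally, the uniform scale-invariant bound on $\Phi^e(r)$, holding for every unit vector $e$, converts by the free-boundary techniques of \cite{CKS} and \cite{ACF} into a $C^{1,1}$-type estimate for $u$ at $x_0$. Together with the global a priori growth $|u(x)|=O(|x|^2\log|x|)$ coming from (\ref{eq26}) and the equation $\Delta u=-\chi_\Omega$ on $\R^n$, this yields $|u(x)|\le C_0(1+|x|)^2$, whence $S_j\le 4C_0\cdot 4^j$ for all $j$. The main obstacle is precisely this last step: extracting a pointwise $L^\infty$ quadratic bound from the integrated ACF control requires the covering arguments and the directional $C^{1,1}$-extraction machinery developed in \cite{CKS}; once that is in place, the whole chain is uniform in $j$ and the constant $C_0$ depends only on $C_2$ and the dimension.
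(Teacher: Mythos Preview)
First, a labeling issue: Theorem~\ref{thm:ACF} is the Alt--Caffarelli--Friedman monotonicity formula, which the paper merely quotes from \cite{ACF} and does not prove. Your proposal is evidently an attempt at Lemma~\ref{lem:2}, so I compare it against the paper's proof of that lemma.

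The overall idea---use ACF monotonicity to push control from the good scales $\{2^{j_k}\}_{j_k\in\mathbb{K}}$ to all scales---is the same as the paper's, but your final step has a genuine gap. From $\Phi^e(r)\le M$ for all $r$ you assert a $C^{1,1}$-type estimate ``by the free-boundary techniques of \cite{CKS} and \cite{ACF}.'' No such black box exists: \cite[Theorem~I]{CKS} takes an $L^\infty$ bound on $u$ as \emph{input} and deduces $C^{1,1}$ through a blow-up contradiction in which ACF is one ingredient; it does not furnish an implication ``$\Phi$ bounded $\Rightarrow$ $D^2u$ bounded.'' Invoking it here is circular, since a uniform $L^\infty$ bound at every scale is precisely the conclusion you seek. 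A bound on the \emph{product} $\Phi^e(r)$ controls neither factor separately and does not by itself yield pointwise quadratic growth. You flag this as ``the main obstacle,'' but deferring it to \cite{CKS} does not close it.

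The paper circumvents this by arguing by contradiction so that the rescaled ACF functional tends to \emph{zero}, not merely stays bounded. It introduces $\mathbb{M}=\{j:4S_{j-1}>S_j\}$, disposes of the case $\mathbb{M}$ finite by telescoping, and otherwise assumes $S_{j_m}\ge m\,4^{j_m}$ along $j_m\in\mathbb{M}$. Setting $u_m(x)=u(2^{j_m}x)/S_{j_m-1}$ gives $\sup_{B_{1/2}}|u_m|=1$, $\sup_{B_1}|u_m|\le4$, and $|\Delta u_m|\le 4/m$. Monotonicity up to a scale $2^{j_k}$ with $j_k\in\mathbb{K}$, together with \cite[Theorem~I]{CKS} applied \emph{only at that good scale} (where the $L^\infty$ hypothesis is available), yields
\[
\Phi\bigl(\tfrac12,\partial_i u_m^{+},\partial_i u_m^{-}\bigr)\le\Bigl(\tfrac{4^{j_m}}{S_{j_m-1}}\Bigr)^{4}C\le C\Bigl(\tfrac{4}{m}\Bigr)^{4}\to 0.
\]
Poincar\'e then forces one of $(\partial_i u_\infty)^{\pm}$ to be constant, so $\partial_i u_\infty$ has a sign; being harmonic and vanishing at the origin, it is identically zero, whence $u_\infty\equiv0$, contradicting $\sup_{B_{1/2}}|u_m|=1$. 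The decisive point your direct approach forgoes is the extra factor $\bigl(4^{j_m}/S_{j_m-1}\bigr)^{4}\to0$ supplied by the contradiction hypothesis: boundedness of $\Phi$ is not enough, vanishing is what drives the argument.
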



Let us make several remarks about the applications of this theorem 
here. We may assume the origin belongs to $\Omega^c$. Then the
partial derivatives $\part_i u$, of the Schwarz potential $u $ of
$\Omega$, are harmonic in $\Omega$ and vanish on
$\R^n\setminus\Omega$. Therefore  $\part_i u^+:=\max\{\part_i
u,0\}$ and $\part_i u^-:=-\min\{\part_i u,0\}$ are subharmonic
and satisfy the requirements of Theorem \ref{thm:ACF}. Further
properties are:
\begin{enumerate}
 \item[a)] \label{prop:a} 
 If the second derivatives of $u$ are bounded by  $K$ in
$B_r$, then 
\begin{equation*}
 \Phi(r,\part_i u^+,\part_i u^-)\leq
\frac{(nK)^4\omega_n^2}{4}.
\end{equation*} 
 
\item[b)] $\Phi$ is homogeneous with respect to the scaling
  $u_\rho(x)=u(\rho x)/\rho^2$, that is,
\begin{equation}
 \Phi(r\rho,\part_i u^+,\part_i u^-)=\Phi(r,\part_i
u_\rho^+,\part_i u_\rho^-).
\end{equation} 
\end{enumerate}

\noindent \textit{Proof of Lemma \ref{lem:2}.}  
Let $\mathbb{M}$ be the set of all
positive integers such that
\begin{equation}
\label{eq:10}
 4S_{j-1}>S_{j}.
\end{equation}

We consider first the case when $\mathbb{M}$  is finite.
 Then there is  $j_0\in \mathbb{K}$ such
that
\begin{equation}
\label{eq:6}
 S_j\leq \frac 1 4 S_{j+1}\quad \text{for all}\ j\geq j_0.
\end{equation} 
 Since $\mathbb{K}$ is infinite,
for each such $j$ there is $j_k\in
\mathbb{K}$ such that $j<j_k$. Let $m=j_k-j$, then by (\ref{eq:8})
and (\ref{eq:6}), 
\begin{equation*}
 S_j\leq\frac {S_{j+m}} {4^m} =\frac{S_{j_k}}{4^m}\leq
C_2 \frac{ \left(2^{(j+m)}\right)^2}{4^m}=C_2 \left(2^{j}\right)^2
\quad \text{for all}\ j\geq j_0.
\end{equation*}
 For $j<j_0$,
$S_j\leq S_{j_0}\leq C_2 2^{2(j_0-j)}\left(2^{j}\right)^2$. Thus     
(\ref{eq:5}) holds for all $j$.

In the case where $\mathbb{M}$ is infinite, we will prove  that there
is a positive
constant $C_0$ such that 
\begin{equation}
\label{eq:7}
 S_j\leq C_0\left(2^j  \right)^2\quad \text{for all}\ j\in \mathbb{M}.
\end{equation} 
Then inequality (\ref{eq:5}) follows from (\ref{eq:7}).  Indeed, if
not, then there
is $j_1$ such that $S_{j_1}>4C_0\left( 2^{j_1}
\right)^2$. Since $\mathbb{M}$ is infinite, there is an  integer
$j_2\in \mathbb{M}$ such that $j_1<j_2$. Let
\begin{equation*}
 j_3=\max\{j<j_2: 4C_0\left( 2^{j}\right)^2<S_{j}\}.
\end{equation*}  
Then
\begin{equation*}
 S_{(j_3+1)}\leq 4\left( 4C_0\left(2^{j_3}\right)^2 \right)<4S_{j_3}.
\end{equation*} 
Hence $j_3+1\in \mathbb{M}$, so by (\ref{eq:7}),
\begin{equation*}
 4C_0\left(2^{j_3}\right)^2<S_{j_3}\leq S_{(j_3+1)}\leq
C_0\left(2^{(j_3+1)}\right)^2= 4C_0\left(2^{j_3}\right)^2
\end{equation*} 
and this yields a contradiction.

Thus it remains to prove inequality (\ref{eq:7}). We will assume it
does not hold and derive a contradiction. 
So assume there is  infinite sequence $\{j_m\}\subset\mathbb{M}$
such that
\begin{equation}
 \label{eq:11}
 S_{j_m}\geq m \left(2^{j_m}\right)^2
\end{equation} 
 and set 
\begin{equation*}
u_{m}(x)=\frac{u(2^{j_m}x)}{S_{j_m-1}}.                               
\end{equation*}
This sequence has the following properties: Since $0\in\Omega^c$,
$u_m(0)=|\nabla
u_m(0)|=0$ for all $m$; 
\begin{equation}
\label{eq:9}
 \sup_{B_{ (1/ 2)}}| u_m|=1;
\end{equation} 
from (\ref{eq:10}), 
\begin{equation}
 \label{eq:qagr:5} 
 \sup_{B_1}|u_m|=\frac{S_{j_m}}{S_{j_m-1}}\leq 4
\end{equation}
and from (\ref{eq:11})
\begin{equation}
 \label{eq:qagr:6} 
|\Delta u_m(x)|=|\frac{2^{2j_m}}{S_{j_m-1}}\chi_\Omega(2^{j_m}x)|\leq
\frac{4}{m}.
\end{equation} 
Applying elliptic estimates, we get that
$\{u_m\}$ is bounded in $C^{1,\alpha}\left(B_{(3/4)}\right)$ and
therefore exists a function $u_\infty$ such that $u_m\to u_\infty$ and
$\nabla u_m\to \nabla u_\infty$
uniformly in $B_{(1/2)}$ (we do not distinguish between  sequences 
and subsequences).

We  invoke now Theorem \ref{thm:ACF}. Since $\mathbb{K}$ is infinite,
for each $j_m\in\mathbb{M}$ there is $j_k\in \mathbb{K}$ such that
$j_m\leq j_k$. So
by the  homogeneity  and the monotonicity of $\Phi$ we have,
\begin{equation}
\label{eq:12}
\begin{split}
& \left(\frac{S_{j_m-1}}{\left(
2^{j_m} \right)^2}\right)^4\Phi(\frac 1 2, \part_i
u^+_m,\part_i u^-_m)\\ = &\Phi(\frac 1 2 2^{j_m}, \part_i
u^+,\part_i u^-)\leq\Phi(\frac 1 2 2^{j_k}, \part_i u^+,\part_i
u^-).
\end{split}
\end{equation}
In order to bound $\Phi(\frac 1 2 2^{j_k}, \part_i u^+,\part_i
u^-)$ we set
\begin{equation*}
 u_k(x)=\frac{u(2^{j_k}x)}{\left(2^{j_k}\right)^2},
\end{equation*}  
when  $j_k\in \mathbb{K}$.
Then $u_k$ satisfies the equations
\begin{equation}
\label{eq:qagr:7} 
 \left\{\begin{array}{ll} 
\Delta u_k =1, \quad &\text{in}\ \Omega_k \cap B_{1}\\
u_k(x)=|\nabla u_k(x)|=0, \quad &
\text{on} \ B_{1}\setminus \Omega_k\end{array}\right.,
\end{equation} 
where $\chi_{\Omega_k}(x)=\chi_\Omega(2^{j_k}x)$. So  by condition
(\ref{eq:8}), $\sup_{B_1}|u_k|\leq C_2$. Therefore \cite[Theorem I]{CKS} yields that
$\|u_k\|_{C^{1,1}(B_{(1/2)})}\leq
C$ and the constant $C$ is independent of $k$. Using  properties   a)
 and b)  below Theorem \ref{thm:ACF}, we get that
\begin{equation*} 
\Phi(\frac 1 2
2^{j_{k}},\part_i u^+,\part_i u^-)=\Phi(\frac 1 2,\part_i
u^+_k,\part_i u^-_k)\leq C.
\end{equation*} 
The  combination of the above inequality with (\ref{eq:10}),
(\ref{eq:11}) and
(\ref{eq:12}) implies that
\begin{equation*}
\begin{split}
 & \left( \frac 1 2 \right)^{2n} \left( \int
\limits_{B_{(1/2)}}|\nabla
\part_i
u_m^+|^2 dx\right)\left( \int \limits_{B_{(1/2)}}|\nabla \part_i
u_m^-|^2 dx\right) \\ &\leq \Phi(\frac 1 2, \part_i
u^+_m,\part_i
u^-_m)\leq \left(\frac{\left( 2^{j_m}
\right)^2}{S_{j_m-1}}\right)^4\Phi(\frac 1 2
2^{j_{k}},\part_i u^+,\part_i u^-)\\ &\leq 
C \left(\frac{\left( 2^{j_m}
\right)^2}{S_{j_m-1}}\right)^4
\leq
C\left(\frac 1 m\frac{S_{j_m}}{S_{j_m-1}}\right)^4 \leq
C \left(\frac 4 m\right)^4.
\end{split}
\end{equation*} 
Let $M^+_m$ and $M^-_m$ denote the mean of $\part_i u^+_m$ and
$\part_i u^-_m$ over the ball $B_{(1/2)}$, respectively. Then
Poincar\'e inequality implies 
\begin{equation*}
\begin{split}
& \left( \int \limits_{B_{(1/2)}}| \part_i
u_m^+-M_m^+|^2 dx\right)\left( \int \limits_{B_{(1/2)}}| \part_i
u_m^- - M_m^-|^2 dx\right) \\ \leq C &\left( \int
\limits_{B_{(1/2)}}|\nabla
\part_i
u_m^+|^2 dx\right)\left( \int \limits_{B_{(1/2)}}|\nabla \part_i
u_m^-|^2 dx\right) \leq C \left(\frac 4 m\right)^4.
\end{split}
\end{equation*} 
Letting $m$ tends to infinity, we get
\begin{equation}
\label{eq:17} 
 \left( \int \limits_{B_{(1/2)}}| \part_i
u_\infty^+-M_\infty^+|^2 dx\right)\left( \int \limits_{B_{(1/2)}}|
\part_i
u_\infty^- - M_\infty^-|^2 dx\right) =0.
\end{equation}
Since $\part_i u_\infty^\pm(0)=0$,  equality (\ref{eq:17}) implies
that
$\part_i u_\infty$ does not change sign in $B_{(1/2)}$. In addition,
(\ref{eq:qagr:6}) shows that $\part_i u_\infty$ is harmonic in
$B_{(1/2)}$, so the maximum principle implies that $\part_i
u_\infty\equiv 0$ for $i=1,...,n$. Thus $u_\infty$ 
identically equals to a constant and since $u_\infty(0)=0$, we
conclude that
$u_\infty\equiv 0$ in $B_{(1/2)}$. But this contradicts
(\ref{eq:9}). This completes the proof of Lemma \ref{lem:2}. 
\hfill{$\square$}

\vskip 5mm

Having the quadratic growth  at hand (Lemma \ref{lem:quadratic
growth1}),  we can now follow  the proof of 
\cite[Theorem II, Case 2]{CKS} and conclude that if $\Omega$ is a null
quadrature domain satisfying  condition (\ref{eq:13}), then it is a
half-space. But we
have
chosen here a different approach which is based upon the computation
of a potential of
a cone (\ref{eq:30}). This method  requires a precondition that the Schwarz
potential is non-positive, therefore   in the case of a global 
obstacle problem
we may use it without relying on \cite{CKS}.

\vskip 5mm
\noindent \textit{Proof of Theorem \ref{thm:3}.}
The Schwarz potential $u$ is non-positive since it has quadratic     
expression growth, therefore by
\cite[Corollary 7]{Ca2}, $\R^n\setminus\Omega$ is convex.  
Let $\{\rho_k\}$ be a sequence tending to infinity and set 
\begin{equation*}
 u_k(x):=\frac{u(\rho_k x)}{\rho_k^2}.
\end{equation*}
Then inequality (\ref{eq:2}) implies that $u_k$ converges to a function $u_\infty$
and $\nabla u_k$ converges to $\nabla u_\infty$ uniformly on compact subset of
$\mathbb{R}^n$. Let $K:=\{x: u_\infty(x)<0\}$, then $\Delta u_\infty =-1$ on $K$,
$u_\infty$ vanishes on complement $K^c$ and $K^c$ is also convex. Without loss of
generality we may assume the origin belongs to $\part\Omega$ which implies that $0\in
\part K$. By means of \cite[Theorem 2.5]{KS}, $K$ is a cone and therefore  
$u_\infty$ is a potential of the cone $K$. So $u_\infty$  has the formula
(\ref{eq:30}) and since $u_\infty\leq0$ in $\R^n$, the expression 
(\ref{eq:30}) 
cannot contain the logarithmic term.  In addition $K^c$ is convex,   so Corollary 3.9
in \cite{KM} implies that $K$ is half-space. 

We will accomplish the proof by showing that 
\begin{equation}
\label{eq:25}
 K^c\subset \Omega^c.
\end{equation} 
Since $\Omega^c$ is convex and $K^c$ is a half-space, the above
inclusion implies that $\Omega$  must be a half space. Note
that both $K^c$ and $\Omega^c$ are closed sets, hence it suffices to
show ${\rm int}(K^c)\subset{\rm int}(\Omega^c)$. Suppose it fails,
then there is $x_0\in {\rm int}(K^c)$ such that $x_0\in
\overline{\Omega}$. Since $\Omega^c$ is convex, $\rho x_0\in
\overline{\Omega}$ for all $\rho\geq 1$.  Let $r$ be a positive
number so that $B_r(x_0)\subset {\rm int}(K^c)$, then by Caffarelli's
Lemma  (see e.g. \cite[Lemma 5]{Ca2}),
\begin{equation}
 \inf_{B_{r\rho}(\rho x_0)}u(x)\leq -c \left( r\rho \right)^2
\end{equation} 
for any $\rho\geq 1$ and where $c$ is a positive constant. Hence 
\begin{math}
 \inf_{B_{r}( x_0)}u_k(x)\leq -c r ^2
\end{math} 
for all $k$. Since $u_k$ converges uniformly to $u_\infty$, we have
obtained that $B_r(x_0)\not\subset {\rm int}(K^c)$ which is a
contradiction. This establish (\ref{eq:25}) and completes the proof
of Theorem \ref{thm:3}.
\hfill{$\square$}

\subsection{Null Quadrature Domains with Thin Complement} $~$

In this section we will prove the quadratic growth of the Schwarz potential of null
quadrature $\Omega$ under the complementary condition to
(\ref{eq:13}), namely
\begin{equation}
\label{eq:34}
    \lim_{\rho\to \infty}\frac{|\Omega^c\cap B_\rho|}{|B_\rho|}=0.
\end{equation} 
The main
idea here is to show that if a set $D$ has  zero Lebesgue
density at infinity, then the blow-up of its potential will tend to
zero. Using the representation (\ref{eq20}) of the Schwarz potential, we
will get that the blow-up of
the Schwarz potential tends to a quadratic polynomial and that yields
the quadratic growth in this case.  

\begin{proposition}
\label{prop:2.3}
 If $D$ is set satisfying the condition
\begin{equation}
\label{eq:19}
    \lim_{\rho\to \infty}\frac{|D\cap B_\rho|}{|B_\rho|}=0,
\end{equation}
then
\begin{equation*}
 \lim_{\rho\to\infty}\|\chi_D(\rho\ \cdot)\|_{\mathcal{L}}=0,
\end{equation*}
where the norm $\|\cdot\|_{\mathcal{L}}$ is defined by (\ref{eq:1}). 
\end{proposition}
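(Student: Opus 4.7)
The plan is to evaluate $\|\chi_D(\rho\,\cdot)\|_{\mathcal{L}}$ directly by a change of variables, then split the resulting integral over $D$ into a near‐zone and far‐zone and bound each using the density hypothesis.

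First, by definition of the norm in $\mathcal{L}$,
\[
\|\chi_D(\rho\,\cdot)\|_{\mathcal{L}} = \int_{\R^n}\frac{\chi_D(\rho x)}{1+|x|^{n+1}}\,dx,
\]
and substituting $y=\rho x$ gives the exact formula
\[
\|\chi_D(\rho\,\cdot)\|_{\mathcal{L}} = \int_D \frac{\rho}{\rho^{n+1}+|y|^{n+1}}\,dy.
\]
This representation is the key reduction; everything else is estimating this integral.

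I would then split $D = (D\cap B_\rho)\sqcup (D\setminus B_\rho)$. For the inner piece, the integrand is bounded by $\rho^{-n}$, so
\[
\int_{D\cap B_\rho}\frac{\rho}{\rho^{n+1}+|y|^{n+1}}\,dy \;\leq\; \rho^{-n}|D\cap B_\rho| \;=\; |B_1|\cdot\frac{|D\cap B_\rho|}{|B_\rho|},
\]
which tends to $0$ by hypothesis (\ref{eq:19}). For the outer piece, I would decompose dyadically: on the annulus $A_k=B_{2^{k+1}\rho}\setminus B_{2^k\rho}$ for $k\geq 0$ one has $|y|^{n+1}\geq 2^{k(n+1)}\rho^{n+1}$, hence
\[
\int_{D\cap A_k}\frac{\rho}{\rho^{n+1}+|y|^{n+1}}\,dy \;\leq\; \frac{|D\cap B_{2^{k+1}\rho}|}{2^{k(n+1)}\rho^n} \;=\; |B_1|\,2^n\,\frac{|D\cap B_{2^{k+1}\rho}|}{|B_{2^{k+1}\rho}|}\cdot 2^{-k}.
\]
Summing in $k$,
\[
\int_{D\setminus B_\rho}\frac{\rho}{\rho^{n+1}+|y|^{n+1}}\,dy \;\leq\; |B_1|\,2^n\sum_{k=0}^{\infty} 2^{-k}\,\frac{|D\cap B_{2^{k+1}\rho}|}{|B_{2^{k+1}\rho}|}.
\]

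To conclude, observe that the summand $f_k(\rho):=2^{-k}\frac{|D\cap B_{2^{k+1}\rho}|}{|B_{2^{k+1}\rho}|}$ is dominated by the summable sequence $2^{-k}$, and for each fixed $k$, $f_k(\rho)\to 0$ as $\rho\to\infty$ by (\ref{eq:19}). Dominated convergence for series then forces the sum, and hence the outer integral, to $0$. Combined with the estimate on the inner piece, this yields $\|\chi_D(\rho\,\cdot)\|_{\mathcal{L}}\to 0$. I do not anticipate a genuine obstacle; the only mild subtlety is verifying that the dyadic tail bound is uniform enough to invoke dominated convergence, which the trivial upper bound $|D\cap B_R|/|B_R|\leq 1$ supplies at no cost.
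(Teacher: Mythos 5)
Your proof is correct and takes essentially the same route as the paper: a change of variables and a splitting of the weighted integral, with the density hypothesis (\ref{eq:19}) killing the bulk term and the decay of the weight controlling the far region. The only difference is bookkeeping in the tail: you estimate it dyadically and invoke dominated convergence for the series, whereas the paper simply bounds $\chi_D\leq 1$ outside a fixed ball $B_R$ (in the unscaled variable) and uses integrability of the weight to make that piece smaller than a prescribed $\epsilon$ uniformly in $\rho$; both versions work.
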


\begin{proof}
 Making  a change of the variables, we have
\begin{equation}
\label{eq:26} 
 \int \limits_{B_R}\chi_D(\rho x)dx=R^n\frac{|B_{(\rho R)}\cap
D|}{|B_{(\rho
R)}|}\to 0\quad \text{as}\ \rho\to \infty.
\end{equation}
For a given $\epsilon>0$, we may find $R>0$ such that
\begin{equation*}
 \int \limits_{\{R\leq |x|\}}\frac{\chi_D(\rho x)}{\left( 1+|x|
\right)^{n+1}}dx<\epsilon
\end{equation*}
for all $\rho$. Hence,
\begin{equation*}
\begin{split}
 \|\chi_D(\rho\
\cdot)\|_{\mathcal{L}} &=\int \limits_{B_R}\frac{\chi_D(\rho
x)}{\left( 1+|x|\right)^{n+1}}dx+\int \limits_{\{R\leq
|x|\}}\frac{\chi_D(\rho
x)}{\left( 1+|x|\right)^{n+1}}dx
\\ & \leq \int \limits_{B_R}\chi_D(\rho
x)dx+\epsilon\to\epsilon \quad\text{as}\ \rho\to\infty.
\end{split}
\end{equation*}
\end{proof}

\begin{proposition}
\label{prop:2.4} 
Let $V_2(D)$ be the potential that is defined by (\ref{eq:31}). If a
set $D$ satisfies condition (\ref{eq:19}), then
\begin{equation*}
\lim_{\rho\to\infty}\frac{1}{\rho^2}V_2\left(D\right)(\rho x)=0
\end{equation*}
uniformly on compact subsets of $\R^n$. 
\end{proposition}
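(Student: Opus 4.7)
My plan is to change variables in the integral defining $V_2(\chi_D)(\rho x)$ so as to replace $\chi_D$ by its rescaling $\chi_{D_\rho}(z):=\chi_D(\rho z)$ (so $D_\rho=\rho^{-1}D$), and then exploit the $\mathcal{L}$-smallness $\|\chi_{D_\rho}\|_{\mathcal{L}}\to 0$ established in Proposition~\ref{prop:2.3}. After substituting $y=\rho z$ and using the homogeneity $\partial^\alpha J(\rho z)=\rho^{-(n-2)-|\alpha|}\partial^\alpha J(z)$ (for $n\geq 3$; in $n=2$ a log-term from the kernel contributes a trivially vanishing $O(\rho^{-2}\log\rho)$ correction), one arrives at the scaled representation
\[
\rho^{-2}V_2(\chi_D)(\rho x)=\int K_\rho(x,z)\chi_{D_\rho}(z)\,dz,
\]
where $K_\rho(x,z):=J(x-z)-\chi_{\{|z|>1/\rho\}}\sum_{|\alpha|\leq 2}\frac{(-x)^\alpha}{\alpha!}\partial^\alpha J(z)$. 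Observe that $K_\rho=J_2$ whenever $|z|>1$, and $K_\rho$ differs from $J_2$ only in the location of the cutoff.

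Fix a compact set $K\subset\R^n$ and set $R:=\max_{x\in K}|x|$. For $|z|>\max(1,2R)$ the Taylor-remainder estimate $|J_2(x,z)|\leq C|x|^3/|z|^{n+1}$ bounds the contribution of this region by $CR^3\|\chi_{D_\rho}\|_{\mathcal{L}}$, which vanishes uniformly on $K$ by Proposition~\ref{prop:2.3}. In the complementary bounded region $|z|\leq\max(1,2R)$, the key input is that $|D_\rho\cap B_M|=|B_M|\,g(\rho M)\to 0$ for every fixed $M>0$, where $g(s):=|D\cap B_s|/|B_s|$; this is a direct consequence of~(\ref{eq:19}). Combined with the local $L^p$-integrability of $J(x-\cdot)$ and of $\partial^\alpha J$ for $|\alpha|\leq 1$, H\"older's inequality disposes of the $J(x-z)$ piece and the low-order Taylor corrections.

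The principal technical difficulty is the $|\alpha|=2$ part of the correction, where $\partial^\alpha J(z)\sim|z|^{-n}$ is not locally integrable at the origin. After reverting to $w=\rho z$, this contribution takes the form $\sum_{|\alpha|=2}\frac{(-x)^\alpha}{\alpha!}\int_{D\cap\{1<|w|\leq\rho\}}\partial^\alpha J(w)\,dw$, carrying no vanishing prefactor; a crude absolute-value bound only yields $O(\log\rho)$. The resolution exploits the vanishing spherical mean of $\partial^\alpha J$, a consequence of $\Delta J\equiv 0$ off the origin: writing $\partial^\alpha J(w)=|w|^{-n}Y_\alpha(w/|w|)$ with $Y_\alpha$ a degree-two spherical harmonic of zero mean and decomposing radially, one reduces the problem to controlling $\int_1^\rho s^{-1}b_\alpha(s)\,ds$ where $b_\alpha(s)$ measures the spherical-harmonic projection of $\chi_D|_{\partial B_s}$. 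The density hypothesis $g(s)\to 0$ then forces the requisite cancellation via a layer-cake/Abel-summation argument. Combining all estimates yields $\rho^{-2}V_2(\chi_D)(\rho x)\to 0$ uniformly on $K$.
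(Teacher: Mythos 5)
Your rescaling identity, the far-field estimate via $|J_2(x,z)|\leq C R^3/|z|^{n+1}$ combined with $\|\chi_{D_\rho}\|_{\mathcal{L}}\to 0$, and the treatment of the locally integrable pieces ($J(x-\cdot)$ and the $|\alpha|\leq 1$ corrections) are all correct, and you have correctly located the crux: after undoing the scaling, the $|\alpha|=2$ correction contributes $\sum_{|\alpha|=2}\frac{(-x)^\alpha}{\alpha!}\int_{D\cap\{1<|w|\leq \rho M\}}\partial^\alpha J(w)\,dw=\sum_{|\alpha|=2}\frac{(-x)^\alpha}{\alpha!}\int_1^{\rho M} b_\alpha(s)\,s^{-1}ds$ with no vanishing prefactor. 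The genuine gap is the final step: the assertion that the density hypothesis (\ref{eq:19}) ``forces the requisite cancellation via a layer-cake/Abel-summation argument'' is not an argument, and as a general implication it is false. Condition (\ref{eq:19}) controls only the averaged size of $|D\cap \partial B_s|$, not the sign of the projection $b_\alpha(s)$ of $\chi_D|_{\partial B_s}$ onto $Y_\alpha$. Concretely, take $D=B_1(p)$ with $|p|\geq 3$: condition (\ref{eq:19}) holds trivially, $b_\alpha$ is supported in a fixed annulus, and by the mean value property $\int_1^{\rho}b_\alpha(s)s^{-1}ds\to\int_D\partial^\alpha J(w)\,dw=|B_1|\,\partial^\alpha J(p)$, which is nonzero for suitable $|\alpha|=2$; placing thin lacunary spherical shells inside a region where $Y_\alpha>c>0$ one can even make $\int_1^\rho b_\alpha(s)s^{-1}ds$ diverge while the density tends to zero. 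So no summation-by-parts device can manufacture a cancellation that the hypothesis does not impose, and your route cannot be completed as described: what it actually yields is that $\rho^{-2}V_2(D)(\rho x)$ tends to zero only up to the $\rho$-dependent harmonic quadratic polynomial $-\sum_{|\alpha|=2}\frac{(-x)^\alpha}{\alpha!}\int_{D\cap\{1<|w|\leq\rho\}}\partial^\alpha J(w)\,dw$, i.e.\ exactly the degree-two normalization built into the kernel (\ref{eq29}).

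The paper's proof is mechanically different and never isolates this moment term: it sets $w_\rho=\phi_\rho\cdot\rho^{-2}V_2(D)(\rho\,\cdot)$ with a cut-off $\phi_\rho$ at scale $\rho$, invokes the reproducing identity $V_2(\Delta w_\rho)=w_\rho$ for compactly supported $C^1$ functions, shows $\|\Delta w_\rho\|_{\mathcal{L}}\to 0$ (Proposition \ref{prop:2.3} for the main term, the growth bound (\ref{eq26}) for the annulus terms produced by $\nabla\phi_\rho$ and $\Delta\phi_\rho$), and then estimates $V_2(\Delta w_\rho)$ by splitting the $y$-integration into $\{|y|\leq 1\}$, $\{1\leq|y|\leq 2R+1\}$ and $\{|y|\geq 2R+1\}$. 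If you wish to salvage your more direct kernel-rescaling route, you must either explain why the moment integrals above are negligible for the sets $D$ to which the proposition is actually applied (complements of null quadrature domains), or settle for convergence modulo quadratic polynomials and verify that this weaker conclusion still suffices for Lemma \ref{lem:quadratic growth:2}; as written, the cancellation step is missing, and your own examples show it cannot follow from (\ref{eq:19}) alone.
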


\begin{proof}
Let $\phi_\rho$ be a cut-off function such that $\phi_\rho(x)=1$ for $|x|\leq \rho$,
$\phi_\rho(x)=0$ for $|x|\geq 2\rho$ and $|\part^\alpha \phi_\rho(x)|\leq C 
\rho^{-|\alpha|}$, and  set \begin{equation*}
w_\rho(x):=\phi_\rho(x)\frac{1}{\rho^2}V_2\left(D\right)(\rho x).
\end{equation*}
 Since $ w_\rho\in C_0^1(\mathbb{R}^n)$, $V_2\left( \Delta w_\rho
\right)=w_p$ and hence it suffices to show that $V_2\left( \Delta
w_\rho\right)$ tends to zero uniformly. We first claim that $\|\Delta
w_\rho\|_{\mathcal{L}}\to 0$, indeed,
\begin{equation*}
\begin{split}
 \Delta w_\rho(x) &=\phi_\rho(x)\chi_D(\rho
x)\\ &+\frac{2}{\rho}\left\{\nabla
\phi_\rho(x)\cdot \nabla V_2(D)(\rho x)\right\}
+\frac{1}{\rho^2}\Delta \phi_\rho(x)V_2(D)(\rho x).
\end{split}
\end{equation*} 
The $\mathcal{L}$-norm of the first term tends to zero by Proposition
\ref{prop:2.3} . By a virtue of the growth properties of the
 potential (\ref{eq26}) and the properties of the mollifier $\phi_\rho$ , 
\begin{equation*}
\begin{split}
 &|\frac{2}{\rho}\left\{\nabla
\phi_\rho(x)\cdot \nabla V_2(\chi_D)(\rho x)\right\}| \\ \leq C &
\frac{2\chi_{\{\rho\leq |x|\leq 2\rho\}}}{\rho^2}(1+|\rho
x|)\log(2+|\rho x|)\leq C
\log(2+\rho^2)
\end{split}
\end{equation*}
and
\begin{equation*}
\begin{split}
& |\frac{1}{\rho^2}\Delta
\phi_\rho(x)V_2(\chi_D)(\rho x)|\\ \leq C &
\frac{\chi_{\{\rho\leq |x|\leq 2\rho\}}}{\rho^4}(1+|\rho
x|)^2\log(2+|\rho x|)\leq C
\log(2+\rho^2).
\end{split}
\end{equation*}
Thus,
\begin{equation*}
\begin{split}
  & \left\|\frac{2}{\rho}\left\{\nabla
\phi_\rho\cdot \nabla V_2(\chi_D)(\rho \ \cdot)\right\}
+\frac{1}{\rho^2}\Delta
\phi_\rho V_2(\chi_D)(\rho \ \cdot)\right\|_{\mathcal{L}} \\ \leq & C
 \log(2+\rho^2) \int \limits_{\{\rho\leq |x|\leq
2\rho\}}\frac{1}{\left(1+|x|\right)^{(n+1)}}dx\leq C\frac{
\log(2+\rho^2)}{\rho}\to0. 
\end{split}
\end{equation*} 

Next we proceed in a similar manner to the proof of Theorem 3.12 in
\cite{KM}. Let $J_2(x,y) $ be the kernel that is defined by
(\ref{eq29}). Then for  $|x|\leq R$, 
\begin{equation*}
 \begin{split}
 &|V_2(\Delta w_\rho)(x)| \leq  \int \limits_{\{|y|\leq
1\}}J(x-y)\Delta
w_\rho(y)dy \\ + & \int \limits_{\{1\leq |y|\leq
2R+1\}}|J_2(x,y)\Delta
w_\rho(y)dy| +\int \limits_{\{2R+1 \leq |y|\}}|J_2(x,y)\Delta
w_\rho(y)dy|.
 \end{split}
\end{equation*} 
Since $D$ has zero Lebesgue density, we see from (\ref{eq:26}) that 
$\Delta w_\rho$ tends to zero almost everywhere
in $B_R$,  so the first two terms tend to zero by Lebesgue dominated
theorem, while for the last one  we  use the estimate
\begin{equation*}
 |J_2(x,y)|\leq C \frac{R^3}{|y|^{(n+1)}}\quad \text{for}\  |x|\leq R\
\text{and}\ 2R+1\leq |y|.
\end{equation*}
Knowing that $\|\Delta w_\rho\|_{\mathcal{L}}\to 0$, we get that 
\begin{equation*}
 \int \limits_{\{2R+1 \leq |y|\}}|J_2(x,y)\Delta w_\rho(y)dy|\leq C
R^3\int
\frac{|\Delta w_\rho(y)|dy}{\left( 1+|y| \right)^{(n+1)}}=
C R^3\left\|\Delta w_\rho\right\|_{\mathcal{L}}
\end{equation*}
also tends to zero.
\end{proof}
Let  $u$ be the Schwarz potential of a null quadrature domain
$\Omega$ then by formula (\ref{eq20}),
\begin{equation}
 u(x)=P(x)-V_2({\Omega^c})(x),
\end{equation} 
where $P=w^c_{\mid_{\Omega^c}}$ for some  $w^c\in V[\Omega^c]$.
According to Theorem \ref{thm:8},
\begin{equation}
\label{eq:32}
 P(x)=\sum_{i,j}a_{ij}x_ix_j+\sum_i b_ix_i+c.
\end{equation} 

Applying Proposition \ref{prop:2.4}  to $\Omega^c$ we get the desired
quadratic growth. 
\begin{lemma}
 \label{lem:quadratic growth:2}
Suppose $\Omega$ is a null quadrature domain such that the complement
$\Omega^c$ satisfies condition (\ref{eq:19}), then 
\begin{equation}
\label{eq:20}
 \lim_{\rho\to\infty} \frac{u(\rho x)}{\rho^2}=\sum_{i,j}a_{ij}x_ix_j
\end{equation} 
and the convergence is uniform on compact subsets
of $\mathbb{R}^n$. Consequentially, there  is a constant $C$ such that
\begin{equation*}
 |u(x)|\leq C (1+|x|)^2.
\end{equation*}
\end{lemma}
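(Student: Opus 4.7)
The plan is to combine the integral representation of the Schwarz potential from Proposition~\ref{TM13} with the decay result of Proposition~\ref{prop:2.4}. First, by Theorem~\ref{thm:8} together with Proposition~\ref{TM13}, choosing the specific representative $v^c:=V_2(\Omega^c)\in V[\Omega^c]$, the function $w^c:=u+V_2(\Omega^c)$ is entire real analytic on $\R^n$, satisfies $\Delta w^c=-1$, and coincides on $\Omega^c$ with the quadratic polynomial $P$ of (\ref{eq:32}). Since $\Omega^c$ contains an open ball (otherwise $u$ is trivially a quadratic polynomial and the statement holds directly), real analyticity of $w^c$ and $P$ forces $w^c\equiv P$ on all of $\R^n$, so that
\[
u(x)=P(x)-V_2(\Omega^c)(x),\qquad x\in\R^n.
\]

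Second, the hypothesis (\ref{eq:19}) applied to $D=\Omega^c$ is exactly that of Proposition~\ref{prop:2.4}, which gives
\[
\frac{1}{\rho^2}V_2(\Omega^c)(\rho x)\longrightarrow 0\qquad\text{uniformly on compact subsets of }\R^n.
\]
A direct calculation from (\ref{eq:32}) shows
\[
\frac{P(\rho x)}{\rho^2}=\sum_{i,j}a_{ij}x_ix_j+\frac{1}{\rho}\sum_i b_i x_i+\frac{c}{\rho^2}\longrightarrow\sum_{i,j}a_{ij}x_ix_j,
\]
again uniformly on compact sets. Subtracting the two limits produces (\ref{eq:20}).

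Finally, to deduce the quadratic growth bound, specialize the uniform convergence in (\ref{eq:20}) to the unit sphere $\{|x|=1\}$. Writing $Q(x):=\sum_{i,j}a_{ij}x_ix_j$, there exists $\rho_0$ such that $\sup_{|x|=1}|u(\rho x)|/\rho^2\leq\sup_{|x|=1}|Q(x)|+1=:C_1$ for all $\rho\geq\rho_0$, which translates into $|u(y)|\leq C_1|y|^2$ for $|y|\geq\rho_0$. On the compact set $\overline{B_{\rho_0}}$, standard elliptic regularity applied to $\Delta u=-\chi_\Omega\in L^\infty$ shows $u$ is continuous, hence bounded by some constant $M$. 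Combining the two regimes yields $|u(x)|\leq C(1+|x|)^2$ on $\R^n$ with $C=\max(C_1,M)$.

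The main obstacle is in the first step: one must identify the abstract real-analytic extension $w^c$ supplied by Proposition~\ref{TM13} with the concrete quadratic polynomial $P$ produced by Theorem~\ref{thm:8}, so that the decay estimate of Proposition~\ref{prop:2.4} can be subtracted cleanly from an explicit polynomial. Once this identification is in place, the rest of the argument reduces to the uniform scaling limit already encoded in Proposition~\ref{prop:2.4} together with the elementary scaling behavior of polynomials of degree at most two.
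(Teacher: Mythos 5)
Your argument is correct and follows the paper's own route: write $u=P-V_2(\Omega^c)$ via the decomposition (\ref{eq20}) together with Theorem \ref{thm:8} (your unique-continuation step identifying $w^c$ with $P$ is exactly what the paper uses implicitly), then apply Proposition \ref{prop:2.4} to $\Omega^c$ and the elementary scaling of the quadratic polynomial. The added details on the degenerate case and on passing from the uniform limit to the bound $|u(x)|\leq C(1+|x|)^2$ are fine and consistent with the paper.
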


\begin{remark}
 It is possible to show the quadratic growth for unbounded
quadrature domains  of measures with compact support. In that
case the monotonicity formula Theorem \ref{thm:ACF} need to be
replaced by a monotonicity formula of Caffarelli, Jerison and Kenig
\cite{CJK}. 
\end{remark}

\subsection{The structure of null quadrature domains} $~$

In this subsection we discuss the relations between   shape of
null quadrature domains and the quadratic polynomial (\ref{eq:32}),
the internal potential of the complement. 

We first note that if the complement $\Omega^c$ of a null quadrature
domain has  positive upper Lebesgue density at infinity, then Theorem
\ref {thm:3} implies that $\Omega $ is a half space. Hence, in an
appropriate coordinate system $P(x)=-\frac 1 2x_n^2$.

In the case of thin complement we conclude from the limit
(\ref{eq:20}) and  Theorem \ref{thm:1} (ii) that:
\begin{corollary}
 Suppose $\Omega$ is a null quadrature domain such that the complement
$\Omega^c$ satisfies condition (\ref{eq:19}), then 
\begin{equation*}
 \sum_{i,j}a_{ij}x_ix_j\leq 0.
\end{equation*}
\end{corollary}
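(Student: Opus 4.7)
The plan is to combine the blow-up limit of Lemma \ref{lem:quadratic growth:2} with the sign information supplied by Theorem \ref{thm:1} (ii). First, I would note that Theorem \ref{thm:1} (ii) is now available in the thin-complement regime: Lemma \ref{lem:quadratic growth:2} has just furnished the quadratic growth bound $|u(x)| \leq C(1+|x|)^2$, which is exactly the hypothesis needed to invoke \cite[Theorem II]{CKS} and conclude that the Schwarz potential $u$ of $\Omega$ is non-positive everywhere in $\R^n$.

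The rest is immediate. For every fixed $x \in \R^n$ and every $\rho > 0$, non-positivity of $u$ gives
\[
\frac{u(\rho x)}{\rho^2} \leq 0.
\]
Letting $\rho \to \infty$ and using the uniform-on-compacta convergence (\ref{eq:20}) of the rescalings to $\sum_{i,j} a_{ij} x_i x_j$, the limit inherits the inequality, yielding $\sum_{i,j} a_{ij} x_i x_j \leq 0$ for every $x \in \R^n$, as required.

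There is no real technical obstacle; the corollary is a clean consequence of Lemma \ref{lem:quadratic growth:2} and Theorem \ref{thm:1} (ii). The substantive content, rather than the proof itself, is the geometric reinterpretation: by Theorem \ref{thm:8} the polynomial $P$ in (\ref{eq:32}) is (up to choice of representative) the internal generalized Newtonian potential of $\Omega^c$, so the matrix $(a_{ij})$ records its quadratic part. The corollary thus says that this quadratic part is forced to be negative semi-definite, giving a rigid algebraic constraint on the admissible shapes of null quadrature domains with zero Lebesgue density at infinity and serving as the input for the finer classification pursued in \cite{MK}.
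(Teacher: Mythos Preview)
Your proof is correct and matches the paper's own argument: the paper simply states that the corollary follows from the limit (\ref{eq:20}) together with Theorem \ref{thm:1} (ii), and you have spelled out exactly that reasoning, including the observation that Lemma \ref{lem:quadratic growth:2} supplies the quadratic growth needed to invoke \cite[Theorem II]{CKS} for the non-positivity of $u$.
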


Let $A=(a_{ij})$ be the matrix associated with this polynomial. The
limit (\ref{eq:20}) identifies  with each null quadrature domain with
thin complement a unique matrix $A$.

When $\Omega^c$ is bounded, then it is an ellipsoid and
consequently 
the matrix $A$ is negative definite  (see. e.g. \cite{Ka3}). In
addition, up to a translation and dilation, 
there is one to one corresponding between the class of null quadrature
domains having a bounded complement and negative defined matrices. 

In case $\Omega^c$ is unbounded, it must contain an infinite ray,
since  by Theorem \ref{thm:1} it is a convex set. We
may
choose a coordinates system so that this ray is the positive
$x_1$-axis. Applying the limit  (\ref{eq:20}) and its first order
derivatives along this ray, we conclude that $a_{1,j}=0$,
$j=1,...,n$. Thus the matrix $A$ is rank deficient when $\Omega^c$ is
unbounded.

We use now the following known geometric facts (for the proof see
e.g. Appendix of \cite{Al})

\begin{lemma}
\label{TM4}
Let $D \neq \R^n$ be an unbounded convex domain in $\R^n$. Then
$D$ has (at least) one of the following properties:
  \par {\rm(i)} $D$ is a cylinder;
  \par {\rm(ii)} $\part D$ is a graph of a convex function in an
appropriate Cartesian
coordinate system in $\R^n$.
\end{lemma}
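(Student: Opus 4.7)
The plan is to analyze $D$ via its recession (characteristic) cone
\[
C := \{v \in \R^n : x + tv \in \overline{D}\ \text{for all}\ x \in \overline{D},\ t \geq 0\},
\]
which is a nontrivial closed convex cone because $\overline{D}$ is closed, convex and unbounded. The dichotomy in the statement corresponds exactly to whether the lineality space $L := C \cap (-C)$ is nontrivial or trivial.

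If $L \neq \{0\}$, I would show $D$ must be a cylinder: since $\overline{D}$ is invariant under every translation by a vector of $L$, the orthogonal decomposition $\R^n = L \oplus L^\perp$ yields $\overline{D} = (\overline{D} \cap L^\perp) + L$, and choosing Cartesian coordinates adapted to this splitting gives property (i).

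If instead $L = \{0\}$, then $C$ is a pointed closed convex cone. The plan is to pick any nonzero $e \in C$ and work in Cartesian coordinates in which $e = e_n$, writing points as $x = (x', x_n)$ with $x' \in \R^{n-1}$. For each $x'$ the section $I(x') := \{t \in \R : (x', t) \in \overline{D}\}$ is a closed interval; since $e_n \in C$ this interval is unbounded above, while pointedness ($-e_n \notin C$) forces it to be bounded below. Hence $I(x') = [f(x'), \infty)$ whenever it is nonempty, and the convexity of $\overline{D}$ translates directly into convexity of $f$ on its (convex) domain of definition. Because $D = \text{int}(\overline{D})$ for an open convex set, $\partial D$ is precisely the graph of the convex function $f$, giving property (ii).

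The main obstacle is the justification in the second case that pointedness of $C$ implies the lower boundedness of each nonempty $I(x')$. This relies on the standard characterization that for a closed convex set $\overline{D}$, one has $v \in C$ iff there exists some $x \in \overline{D}$ with $x + tv \in \overline{D}$ for all $t \geq 0$; applied to $-e_n$, the failure of this condition forces every downward ray from a point of $\overline{D}$ to exit $\overline{D}$ in finite time, which together with the closed convexity of $I(x')$ yields the half-line structure.
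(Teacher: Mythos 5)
The paper does not actually prove this lemma --- it is quoted as a known geometric fact with a pointer to the Appendix of \cite{Al} --- so your argument has to stand on its own. Your first case ($L\neq\{0\}$, cylinder) is fine. The gap is in the pointed case: from an \emph{arbitrary} nonzero $e\in C$ you cannot conclude that $\partial D$ ``is precisely the graph of $f$.'' The boundary may contain segments or rays parallel to $e$ lying over the relative boundary of the projection of $\overline{D}$ onto $e^{\perp}$ (``vertical walls''). Simplest counterexample to that step: let $D$ be the open first quadrant in $\R^2$; its recession cone is the closed quadrant (pointed), and choosing $e=e_2\in C$ your construction gives $f\equiv 0$ on $[0,\infty)$, while $\partial D$ also contains the ray $\{0\}\times[0,\infty)$, so it is not the graph of $f$. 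Conclusion (ii) does hold for this $D$, but only after a better choice of direction: with $e=(1,1)/\sqrt2$, which lies in the \emph{interior} of $C$, the boundary becomes the graph of $v\mapsto|v|$. So at the very least you must take $e\in\operatorname{int}(C)$ and prove the accompanying facts (for such $e$ one has $x+te\in\operatorname{int}(\overline{D})$ for every $x\in\overline{D}$, $t>0$, hence the projection of $\overline{D}$ onto $e^\perp$ is open and convex and $\partial D$ is exactly the graph of $f$ over it); this repairs your argument whenever $C$ is full-dimensional.

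When $C$ is pointed but has empty interior the difficulty is genuine and not just a matter of choosing $e$ more cleverly. Consider $D=\{(x,y,z)\in\R^3:\ x>0,\ z>x^2+y^2\}$: its recession cone is the single ray spanned by $e_3$, it is not a cylinder, and its boundary contains the two-dimensional vertical wall $\{x=0,\ z\ge y^2\}$; one checks that no Cartesian frame makes this boundary a graph (lines parallel to the wall meet it in segments, while transversal lines through the wall re-enter $D$ and exit again through the paraboloid part). Thus in this regime the dichotomy cannot be obtained by your fiber argument at all; it has to be formulated or used with care --- e.g.\ with a weaker ``epigraph over a convex subdomain'' reading of (ii), or by invoking the extra regularity of $\partial\Omega$ available where the paper applies the lemma. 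Your proposal does not address this case, and that is the essential missing content; it is presumably why the paper defers to Alexandrov's appendix instead of giving a short recession-cone argument.
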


So in the case of unbounded thin complement there are two options for
each semi-negative symmetric matrix $A$ with rank $k<n$:

\begin{enumerate}
 \item[(a)] If the complement is contained in a strip, then it follows
from the forthcoming paper \cite{MK} (see also \cite[Theorem
4.13]{KM}) that $\Omega^c=E\times \mathbb{R}^{n-k}$, where $E$ is an
ellipsoid in
$\mathbb{R}^k$;

\item[(b)] If $\Omega^c$ is not contained in a strip, then   Lemma
\ref{TM4} and Theorem \ref{thm:1} imply that
$\Omega^c=\{x_1>f(x_{n-k+1},...,x_n)\}$, where $ f$ is a
convex and real analytic function.
\end{enumerate}

\bibliographystyle{amsplain}

\end{document}